\newtheorem{definition}{Definition}[section]
\newtheorem{theorem}{Theorem}[section]
\newtheorem{corollary}{Corollary}[section]
\newtheorem{lemma}{Lemma}[section]
\newtheorem{remark}{Remark}[section]
 \theoremstyle{plain}
\newtheorem{thm}{Theorem}[section]
\newtheorem{prop}[thm]{Proposition}
\DeclareMathOperator{\sgn}{sgn}
\numberwithin{equation}{section}
\begin{document}

\begin{center}
\Large{\textbf{On the discrete Safronov–Dubovski\v{i} coagulation equation: well-posedness, mass-conservation and asymptotic behaviour}}
\end{center}
\medskip
%\centerline{by}
\medskip
\centerline{${\text{Mashkoor~Ali}}$, ${\text{Pooja~Rai}}$ and   ${\text{Ankik ~ Kumar ~Giri$^*$}}$ }\let\thefootnote\relax\footnotetext{$^*$Corresponding author. Tel +91-1332-284818 (O);  Fax: +91-1332-273560  \newline{\it{${}$ \hspace{.3cm} Email address: }}ankik.giri@ma.iitr.ac.in}
\medskip
{\footnotesize
 %please put the address of the second  and third author

  \centerline{ ${}^{}$Department of Mathematics, Indian Institute of Technology Roorkee,}
   %\centerline{Other lines}
   \centerline{Roorkee-247667, Uttarakhand, India}

}

\bigskip

\
\begin{quote}
{\small {\em \bf Abstract.} The global existence of mass-conserving weak solutions to the Safronov--Dubovski\v{i} coagulation equation is shown for the coagulation kernels satisfying the at most linear growth for large sizes. In contrast to previous works, the proof mainly relies on the de la Vall\'{e}e--Poussin theorem \cite[Theorem 7.1.6]{BLL 2019}, which only requires the finiteness of the first moment of the initial condition. By showing the necessary regularity of solutions, it is shown that the weak solutions constructed herein are indeed classical solutions. Under additional restrictions on the initial data, the uniqueness of solutions is also shown. Finally, the continuous dependence on the initial data and the large-time behaviour of solutions are also addressed.}
\end{quote}

\vspace{.3cm}

\noindent
{\rm \bf Mathematics Subject Classification(2020).} Primary: 34A12, 34K30, 47J35; Secondary: 46B50.\\

{ \bf Keywords:} Safronov–Dubovskii coagulation equation, Existence, Uniqueness, Mass conservation, large-time behaviour.\\

%%%%%%%%%%%%%%%%%%%%%%%%%%%%%%%%%%%%
%%%%%%%%%%%%%%%%%%%%%%%%%%%%%%%%%%%%

%%%%%%%%%%%%%%%%%%%%%%%%%%%%%%%%%%%%
%%%%%%%%%%%%%%%%%%%%%%%%%%%%%%%%%%%%

\section{\textbf{INTRODUCTION}}

The discrete coagulation equation is an infinite set of ordinary differential equations for the dynamics of cluster growth that describes the mechanism allowing clusters to undergo coagulation as the only event. We restrict ourselves to binary coagulation, which means two clusters combine to form a bigger one. It is assumed that the clusters are fully identified by their size/volume (a positive integer). A cluster of size $i$ (or $i$-cluster) is made of $i$ identical elementary particles known as monomers. The discrete coagulation equation describes the evolution of the concentration $\xi_i(t)$, $ i \in \mathbb{N}/\{0\}$ of clusters of size $i$ (or $i$-mers) at time $t \geq  0$ and can be written as the nonlinear nonlocal equation of the form
\begin{align}
\frac{d\xi_i}{dt}&= \frac{1}{2}\sum_{j=1}^{i-1} \gamma_{j,i-j} \xi_j \xi_{i-j} -\sum_{j=1}^{\infty} \gamma_{i,j} \xi_i \xi_j \label{SCE}\\
\xi_i(0) &= \xi_i^{in}\geq 0 \label{SCEIC}
\end{align}
for $i \geq 1$.  Here $\gamma_{i,j}$ is the coagulation rate at which clusters of size $i$ merges with the clusters of size $j$ to form the larger clusters. It is assumed that $\gamma_{i,j}$ is nonnegative and symmetric, that is, $ 0\leq \gamma_{i,j} = \gamma_{j,i} \ \ \forall \ \ i, j \ge 1$.
The first term on the right-hand side of \eqref {SCE} accounts for the formation of $i$-clusters by binary coalescence of smaller ones, while the second term accounts for their depletion through coagulation with other clusters. In \cite{SMOL 1916, SMOL 1917}, Smoluchowski initially introduced a system of mathematical equations of the form \eqref{SCE}--\eqref{SCEIC} which is later referred to as \emph{Smoluchowski coagulation equation}  describing the coagulation of colloids moving in a Brownian motion. The system of equations given by \eqref{SCE}--\eqref{SCEIC} has been extensively studied in the presence of the fragmentation term, using two different techniques. If the primary focus is on the effects of strong coagulation, such as gelation, it becomes necessary to impose assumptions that ensure the coagulation term dominates over other processes. In such cases, the analysis requires the use of weak compactness arguments and working with weak solutions, as demonstrated in works such as \cite{BALL 90, CARR 92, COSTA 94, COSTA 95, PL 1999, PL 2002}. Alternatively, if the process is driven by the linear fragmentation part, it allows for more flexibility in selecting the fragmentation and transport parts, resulting in strong, classical solutions obtained within the framework of the semigroup theory. This has been shown in works such as \cite{BAN 12, BANLAM 12, BAN 18}. This discrete system and its continuous counterpart   have received considerable attention  in the mathematics and physics literature in recent years; due to the enormous number of works devoted to them, we refer to the classical review \cite{RLD 1972} and the more recent \cite{BLL 2019} for an overview of the topic.

 \par
In this article, we are mainly concerned with coagulation models with disperse systems, and the application of such models can be found in astrophysics (formation of stars and planets), chemistry (reacting polymers), meteorology (formation of clouds), and physics (growth of gas bubbles in solids). In \cite{PBD 1999}, Dubovski\v{i} looked at a dispersed system and introduced a model known as the Safronov-Dubovski\v{i} coagulation model, in which only binary collisions between particles can happen simultaneously, and the mass of each particle is also assumed to be proportional to some $m_0 > 0$. Collisions between particles of mass $im_0$ and $jm_0$ cause particles to grow in the system. Particles with mass $im_0$ will be referred to as $i$-mers, with $m_0$ being the mass of the smallest particle in the system. In this model, a collision between an $i$-mer and a $j$-mer causes the $j$-mer to split into $j$ monomers if $j\leq i$. Hence, we have another characterization of the coagulation process that leads to the balanced equation, also known as the discrete Safronov-Dubovski coagulation equation(DSDCE) is of the form 
\begin{align}
\frac{d\xi_i(t)}{dt}&= \xi_{i-1}(t) \sum_{j=1}^{i-1} j \gamma_{i-1,j} \xi_j(t)-\xi_i(t)  \sum_{j=1}^{i} j \gamma_{i,j} \xi_j(t)-\sum_{j=i}^{\infty} \gamma_{i,j} \xi_i(t) \xi_j(t),\label{DSDE}\\
\xi_i(0) &= \xi_i^{in} \geq 0, \hspace{.2cm}  \label{IC}
\end{align}
for $i \geq 1$.
Note that equation \eqref{DSDE}-\eqref{IC} is a nonlinear initial value problem that describes the dynamics of evolution of the concentration $\xi_i(t)$, $ i \in \mathbb{N}/\{0\}$ of clusters of size $i$ at time $t\ge 0$. The coagulation kernel, $\gamma_{i,j}$ (with $i \neq j$), specifies the rate at which $i$-mers collide with $j$-mers. The first sum in \eqref{DSDE} describes the $i$-mer introduction into the system as a result of collisions between $(i-1)$-mers and monomers produced from fragmented $j$-mers. If $i = 1$, the initial sum is zero. The second sum represents the loss (or decay) of $i$-mers due to monomer merging. The first and second sums are multiplied by $j$ to demonstrate that the collision involves exactly $j$ monomers. The third sum represents the decay of $i$-mers due to fragmentation caused by collisions with bigger particles.
\par
In \cite{PBD 1999}, Dubovskii obtained  the coagulation model proposed by Safronov \cite{SAFR 1972} from DSDCE \eqref{DSDE}--\eqref{IC}, which is given as
\begin{align}
\frac{\partial f}{\partial t} = -\frac{\partial}{\partial x} \Big[f(x,t) \int_0^x y a(x,y) f(y,t) dy \Big]- f(x,t) \int_x^{\infty}  a(x,y) f(y,t) dy \label{COHS}
\end{align}
The distribution  function $f(x,t)$ denotes the distribution of particle of size $x \in (0, \infty)$ at time $t\ge 0$ and the coagulation kernel $a(x,y)$, satisfying $a(x,y)=a(y,x)\ge0, \forall x, y \in (0,\infty)^2$, determines the rate at which particles of $x$ and $y$ coalesce. Using the classical weak-$L^1$ compactness technique, the existence of weak solutions to \eqref{COHS}, with suitable initial data, has been shown in \cite{BPA 2022,LLW 2003} whereas the self similar solutions have been discussed in \cite{BP 2007, PL 2005, PL 2006}.\par
Coming back to equations \eqref {DSDE}--\eqref {IC}, several results are available dealing with the solutions' existence, uniqueness, and mass conservation property. Existence and uniqueness of weak solutions to  \eqref {DSDE}--\eqref {IC} have been discussed in \cite{BAG 2005}, when the coagulation kernels satisfy $\lim_{j \to \infty} \frac{\gamma_{i, j}}{j }=0, \hspace{.2cm} i, j \geq 1$ and $\xi^{in} \in l_1$. Also,  a connection between \eqref {DSDE} and \eqref {COHS} has been established through a suitable sequence of solutions to \eqref {DSDE}. In \cite{DAV 2014}, the global existence of the classical solutions was demonstrated when the coagulation kernel satisfy $j\gamma_{i, j} \leq M, \hspace{.1cm} j \leq i$ and for the kernel of the form $\gamma_{i, j} \leq C_{\gamma} h_i h_j$ with an additional assumption $\frac{h_i}{i }\to 0$. Furthermore, mass conservation property for $\gamma_{i, j} \leq C_{\gamma} h_i h_j$ for $h_i \leq i^{\frac{1}{2}} $ and uniqueness for bounded kernels, i.e., $\gamma_{i, j}\leq C_{\gamma}, \hspace{.1cm} i, j\geq 1$ have been discussed. Recently, in \cite{DAS 2021}, global existence and mass-conservation property of solutions have been established for $ \gamma_{i,j} \leq (1+i +j)^{\alpha}$ where $ \alpha \in [0,1]$, whereas uniqueness of solutions is shown under the condition that $\gamma_{i,j} \leq Ci^{\kappa}$ for $ i \leq j$, where $ \kappa \leq 2$, in the weighted $l_1$ space i.e. a sequence $\xi= (\xi_i)_{i \geq 1} \in l_1$ having $ \sum_{i=1}^{\infty} i |\xi_i| < \infty$. In \cite{SR 2022}, the authors establishes the existence, uniqueness, and mass conservation of \eqref{DSDE}--\eqref{IC} when dealing with an unbounded kernel in the form of $\min\{i,j\}\gamma_{i,j}\le(i+j)$ for all $i,j \ge 1$ in the weighted $l_1$ space.  In the same space, recently in \cite{ALI 23}, the existence of \eqref{DSDE}--\eqref{IC} is proved for the coagulation coefficients of a multiplicative type, which are defined as follows:
\begin{align*}
\gamma_{i,j} = \theta_i \theta_j +\kappa_{i,j}.
\end{align*}
Moreover, these coefficients satisfy the following conditions:
\begin{align*}
\inf_{i\ge 1} \frac{\theta_i}{i} =B>0, ~~~~~~~~~ \text{and}~~~~~~~~~~\kappa_{i,j} \le A\theta_i \theta_j~~~~~\text{for each} ~~~~~i,j\ge 1 (A\ge 0).
\end{align*}
\par

Next, we define the moments of the concentration $\xi= (\xi_i(t))_{i\geq 1}$ of order $m\geq 0$ as
\begin{align}
  M_m(\xi(t)):= M_m(t) = \sum_{i=1}^{\infty} i^m \xi_i(t),
\end{align}
where the zeroth ($m=0$) and the first ($m=1$) moments denote the total number of particles and total mass of particles in the system, respectively. Observe that, since particles are neither created nor destroyed in the reactions described by \eqref{SCE} and \eqref{DSDE}, the total mass is expected to be conserved through the time evolution. Because the DSDCE \eqref{DSDE}--\eqref{IC}  only accounts for coagulation processes, the total number of particles (which is nothing but the $l_1$-norm of $\xi=(\xi_i)_{i\geq 1}$) is supposed to decrease to zero as time increases to infinity which is shown in Section \ref{SEC6}.
The current article improves on the results obtained in \cite{DAS 2021}, as several flaws were identified and stated below. The proof of the local existence theorem demonstrated that the truncated mass-conservation law holds, which is incorrect. Furthermore, the finiteness of the second moment of initial data has been employed to prove the existence and uniqueness of the solution, while the first moment of initial data is assumed to be finite. In fact, the uniqueness of the solution is examined for the coagulation kernels that do not overlap with the one for which the existence of a solution is shown, as far as unbounded kernels are concerned. Last but not least, the authors, in \cite{DAS 2021}, claimed to have established the existence of a global classical solution to \eqref{DSDE}--\eqref{IC} in the sense of the definition given for mild solution. Hence, the novelty of our work is that we have improved these results in several ways. We have refer the local existence result from \cite[Lemma 13]{BAG 2005}, and proved the existence of global classical solution to \eqref{DSDE}--\eqref{IC} in a weighted $l_1$  space without assuming the finiteness of the second moment. Furthermore, the uniqueness investigated for the same class of coagulation kernels as the one used to prove its existence.

\par 
The content of the paper is organized as follows. In Section \ref{SEC2}, we introduce the space and state the main theorem. Section \ref{SEC3} outlines the finite-dimensional systems of ordinary differential equations approximating \eqref{DSDE} and the propagation of the moments of their solutions is also explored. The proof of the existence and mass-conservation property of solutions are discussed in Section \ref{SEC4}. The uniqueness of solutions is examined in Section \ref{SEC5} which is followed by the continuous dependence on initial data discussed in Section \ref{SEC6}. Finally, the large-time behavior of solutions is investigated in Section \ref{SEC7}.

\section{\textbf{ Main Results }} \label{SEC2}
To begin with, we introduce some notations and specify what we mean by a solution to \eqref{DSDE}--\eqref{IC}. The mathematical study of \eqref{DSDE}--\eqref{IC} requires to take into account suitable spaces. Following the usual works related to the coagulation fragmentation area, we will consider the Banach spaces
\begin{align}
\Xi_{\lambda} =\Big\{\xi=(\xi_i)_{i\ge 1} \in \mathbb{R}^{\mathbb{N}}, ~~ \sum_{i=1}^{\infty} i^{\lambda} |\xi_i| < \infty\Big\},\qquad \lambda\ge 0,
\end{align}
with the norm defined by 
\begin{align*}
\| \xi \|_{\lambda} =\sum_{i=1}^{\infty} i^{\lambda}|\xi_i|
\end{align*}

and their positive cones
\begin{align*}
\Xi_{\lambda}^+(T) = \{ \xi =(\xi_i)_{i \geq 1} \in \Xi_{\lambda}: \xi_i \geq 0 \hspace{.2cm}  \text{for each } \hspace{.2cm} i \geq 1\}.
\end{align*}
 Let us now define the notions of solutions to \eqref{DSDE}--\eqref{IC} that we will consider.
\begin{definition} \label{DEF1}
Let $T \in(0,+\infty]$ and $\xi^{in} =(\xi_i^{in})_{i\ge 1}$ be a sequence of non-negative real numbers. A solution $\xi =(\xi_i)_{i \geq 1}$ to \eqref{DSDE}--\eqref{IC} on $[0,T)$ is a sequence of non-negative continuous functions satisfying for each $i\ge 1$ and $t\in(0,T)$
\begin{itemize}
\item[(a)] $\xi_i\in \mathcal{C}([0,T))$, $ \sum_{j=i}^{\infty} \gamma_{i,j}\xi_i \xi_j \in L^1(0,t)$,
\item[(b)] and there holds 
\begin{align*}
\xi_i(t) =\xi_i^{in} + \int_0^t \Big[\xi_{i-1}\sum_{j=1}^{i-1}j \gamma_{i-1,j} \xi_j - \xi_{i}\sum_{j=1}^{i}j \gamma_{i,j} \xi_{i} \xi_j -\sum_{j=i}^{\infty} \gamma_{i,j} \xi_i \xi_j\Big] d\tau.
\end{align*}
\end{itemize}
\end{definition}

In order to show the existence, uniqueness and  mass conservation property of   solutions to \eqref{DSDE})--\eqref{IC}, assume that the collision kernel $\gamma_{i,j}$ is non-negative and  symmetric i.e.\
\begin{align}\label{ASSUM1}
0\leq \gamma_{i,j}= \gamma_{j,i},\hspace{.1cm}i,j \geq 1,
\end{align} and satisfies the following growth condition
\begin{align} \label{ASSUM2}
\text{for all} \hspace{.1cm}i,j \geq 1, \hspace{.1cm} \gamma_{i,j} \leq A (i+j),
\end{align}
where $A$ is a positive constant.

Our existence result then reads as follows.
\begin{theorem}\label{MT}
Assume that the coagulation rate $\gamma_{i,j}$ satisfies assumptions \eqref{ASSUM1}--\eqref{ASSUM2} and $\xi^{in}=(\xi_i^{in})_{i\geq 1} \in \Xi_1^+$. Then there is at least one solution $\xi$ to \eqref{DSDE}--\eqref{IC} satisfying
\begin{align}
\|\xi(t)\|_{\Xi_1} = \|\xi^{in}\|_{\Xi_1}, \qquad t\in [0,+\infty). \label{MC}
\end{align}
\end{theorem}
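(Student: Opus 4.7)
The plan is a standard truncation-compactness scheme organised around the discrete de la Vall\'ee--Poussin theorem, in the spirit announced by the abstract: only the bound $\xi^{in}\in\Xi_1^+$ on the data is needed, with no second-moment hypothesis. As a starting point, I would use the finite-dimensional approximation outlined in Section \ref{SEC3}: truncate \eqref{DSDE} by keeping only the indices $i\le N$ and discarding every interaction with $j>N$, and apply Picard--Lindel\"of (or quote \cite{BAG 2005}) to the resulting locally Lipschitz ODE system to obtain a maximal solution $\xi^N=(\xi_i^N)_{1\le i\le N}$. Nonnegativity is a standard invariance argument. Pairing the truncated equations with the weight $i$ and using the symmetry $\gamma_{i,j}=\gamma_{j,i}$ gives $\tfrac{d}{dt}\sum_{i=1}^{N} i\xi_i^N(t)\le 0$, hence $\|\xi^N(t)\|_1\le\|\xi^{in}\|_1$ for all $t\ge 0$, ruling out blow-up and upgrading $\xi^N$ to a global $\mathcal{C}^1$ solution.

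The technical heart of the argument, and the step I expect to be the main obstacle, is the propagation of a superlinear weighted moment. By the de la Vall\'ee--Poussin theorem applied to the summable sequence $(i\xi_i^{in})$, there exists a nondecreasing convex function $\Phi:[0,\infty)\to[0,\infty)$ with $\Phi(0)=0$, $\Phi(r)/r\to\infty$ as $r\to\infty$, and $\sum_{i\ge 1}\Phi(i)\xi_i^{in}<\infty$. Differentiating $\sum_{i=1}^{N}\Phi(i)\xi_i^N(t)$ in time, shifting the index in the birth contribution, and exploiting convexity of $\Phi$ through $\Phi(i)-\Phi(i-1)\le\Phi'(i)$ together with the linear growth \eqref{ASSUM2} and the mass bound of the previous step, I expect the nonlinearities to collapse into a Gronwall estimate of the form
\begin{align*}
\frac{d}{dt}\sum_{i=1}^{N}\Phi(i)\xi_i^N(t)\le C\Big(1+\sum_{i=1}^{N}\Phi(i)\xi_i^N(t)\Big),
\end{align*}
on every $[0,T]$, with $C=C(T,A,\|\xi^{in}\|_1)$ independent of $N$. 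Combined with $i/\Phi(i)\to 0$, this yields the crucial uniform integrability of the family $\{(i\xi_i^N(t))_{i\ge 1}:N\ge 1,\,t\in[0,T]\}$.

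Armed with this $\Phi$-moment bound, extraction of a limit is routine. Inspection of the truncated right-hand side shows that each $\dot\xi_i^N$ is uniformly bounded by a constant depending only on $i$ and $\|\xi^{in}\|_1$, giving equicontinuity of $t\mapsto\xi_i^N(t)$ on every compact interval. A Cantor diagonal procedure then produces a subsequence $\xi_i^{N_k}\to\xi_i$ uniformly on compact subsets of $[0,+\infty)$ for every $i\ge 1$. Passing to the limit in the integral identity of Definition \ref{DEF1}(b) reduces to dominated convergence: the two finite sums converge pointwise and are majorised using the $\Xi_1$ bound, while for the tail $\sum_{j\ge i}\gamma_{i,j}\xi_i^{N_k}\xi_j^{N_k}\le 2A\xi_i^{N_k}\sum_{j\ge i}j\xi_j^{N_k}$ the uniform integrability of $(j\xi_j^{N_k})$ provides the necessary equi-smallness in $j$. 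The limit $\xi$ is nonnegative and continuous, satisfies the integral identity, and is therefore a solution in the sense of Definition \ref{DEF1}.

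It remains to establish \eqref{MC}. Testing the equation formally against $i$, a shift of index in the first sum followed by a symmetric rearrangement using $\gamma_{i,j}=\gamma_{j,i}$ yields
\begin{align*}
\sum_{i\ge 1}i\Big[\xi_{i-1}\sum_{j=1}^{i-1}j\gamma_{i-1,j}\xi_j-\xi_i\sum_{j=1}^{i}j\gamma_{i,j}\xi_j-\sum_{j\ge i}\gamma_{i,j}\xi_i\xi_j\Big]=0;
\end{align*}
the only issue is the interchange of summation in $i$ with the time integration, and this is precisely what the uniform integrability (inherited by $\xi$ through Fatou's lemma and the uniform-in-$N$ $\Phi$-moment bound) makes legitimate. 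Consequently $\|\xi(t)\|_1=\|\xi^{in}\|_1$ on $[0,+\infty)$, concluding the proof.
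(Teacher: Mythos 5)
Your overall strategy coincides with the paper's: truncation, the a priori mass bound, a de la Vall\'ee--Poussin weight controlling a superlinear moment, Helly/equicontinuity extraction, passage to the limit in the tail sums via uniform integrability, and mass conservation by tail control. The extraction and limit-passage steps, and the mass-conservation argument, are all sound and match Section \ref{SEC4}.

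There is, however, a genuine gap in the one step you yourself single out as the heart of the proof. The properties you assign to $\Phi$ --- nondecreasing, convex, $\Phi(0)=0$, $\Phi(r)/r\to\infty$, $\sum\Phi(i)\xi_i^{in}<\infty$ --- are \emph{not} sufficient to produce the Gronwall inequality you assert under the linear-growth hypothesis \eqref{ASSUM2}. After the index shift and the convexity estimate, the birth-minus-death contribution is controlled by $\sum_{i}\sum_{j\le i}(i+j)\bigl(\Phi(i+j)-\Phi(i)-\Phi(j)\bigr)\xi_i^N\xi_j^N$, and the extra factor $(i+j)$ coming from the kernel must be absorbed into $\Phi$ itself. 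For a generic convex superlinear $\Phi$ this fails: with $\Phi(r)=e^r$ the leading term behaves like $\sum_i i\,e^i\xi_i^N\cdot\|\xi^N\|_1$, which is not dominated by $C\sum_i e^i\xi_i^N$, so the differential inequality does not close. What makes the argument work in the paper is the additional requirement that $\Phi'$ be concave (the class $\mathcal{G}_1$, resp.\ $\mathcal{G}_{1,\infty}$), which yields the inequality $(i+j)\bigl(\Phi(i+j)-\Phi(i)-\Phi(j)\bigr)\le 2\bigl(i\Phi(j)+j\Phi(i)\bigr)$ of \eqref{GInequality} and hence $\frac{d}{dt}\sum\Phi(i)\xi_i^N\le C\|\xi^{in}\|_1\sum\Phi(i)\xi_i^N$ as in Proposition \ref{prop1}. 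Correspondingly, you must invoke the \emph{refined} de la Vall\'ee--Poussin theorem (the version in \cite[Theorem 7.1.6]{BLL 2019}), which guarantees that the weight can be chosen in $\mathcal{G}_{1,\infty}$, i.e.\ with concave derivative; the classical statement only gives convexity and superlinearity. Once $\Phi$ is taken in this restricted class, the rest of your argument goes through exactly as written.
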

In other words, the density of the solution $\xi$ is conserved through time evolution.\\

\section{\textbf{Approximating systems}} \label{SEC3}
We demonstrate the existence of solutions to \eqref{DSDE}--\eqref{IC} by taking the limit of solutions to the truncated finite-dimensional system of \eqref{DSDE}--\eqref{IC}. To be more specific, for $ k \geq 2$, let us consider the following truncated system of $k$  ordinary differential equations,

\begin{align}
\frac{d\xi_i^k(t)}{dt}&= \xi_{i-1}^k(t) \sum_{j=1}^{i-1} j \gamma_{i-1,j} \xi_j^k(t)-\xi_i^k(t)  \sum_{j=1}^{i} j \gamma_{i,j} \xi_j^k(t)-\sum_{j=i}^{k} \gamma_{i,j} \xi_i^k(t) \xi_j^k(t), \label{TDSDE}\\
\xi_i^k(0) &= \xi_i^{in} \geq 0, \label{TIC}
\end{align}
for $ i \geq 1$. Next we require following existence result from \cite[Lemma 13]{BAG 2005} for \eqref{TDSDE}--\eqref{TIC}.
\begin{lemma} \label{LEM 1}
For each $k\geq 2$, there exists a unique non-negative solution $\xi^k = (\xi_i^k)_{1\leq i \leq k} $ in $ \mathcal{C}^1([0,T],\mathbb{R}^k)$ to system  \eqref{TDSDE}--\eqref{TIC}. Moreover, we have
\begin{align} 
\sum_{i=1}^k i \xi_i^k(t) \leq \sum_{i=1}^k i \xi_i^{in}, \hspace{.2cm} t\in [0,\infty).\label{TMC}
\end{align}
\end{lemma}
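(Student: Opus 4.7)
The plan is to establish the three claims of the lemma — local well-posedness, non-negativity, and the truncated mass estimate — in turn, and then use the third to promote the local solution to a global one.

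First, I would observe that the right-hand side of \eqref{TDSDE} is a polynomial (in fact quadratic) function of the finitely many unknowns $(\xi_1^k,\dots,\xi_k^k)$, hence continuously differentiable and therefore locally Lipschitz on $\mathbb{R}^k$. The Picard--Lindel\"of theorem then yields a unique maximal $\mathcal{C}^1$ solution on some interval $[0,T_{\max})$.

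Next, for non-negativity, I would invoke a standard quasi-positivity argument: inspecting \eqref{TDSDE}, whenever $\xi_i^k=0$ while all other components remain non-negative, the two negative contributions in the equation vanish (they both carry $\xi_i^k$ as a factor), and only the gain term $\xi_{i-1}^k \sum_{j=1}^{i-1} j\gamma_{i-1,j}\xi_j^k\ge 0$ survives. This is precisely the tangency condition which ensures that the positive cone $\mathbb{R}_+^k$ is positively invariant, so $\xi_i^k(t)\ge 0$ on $[0,T_{\max})$ for each $i$.

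The main technical step is the mass estimate \eqref{TMC}. I would multiply \eqref{TDSDE} by $i$, sum over $i=1,\dots,k$, and then re-index the first sum via the shift $i\mapsto i+1$ (using that the $i=1$ contribution is empty since $\xi_0^k$ does not appear) to rewrite
\begin{align*}
\sum_{i=1}^k i\,\xi_{i-1}^k\!\!\sum_{j=1}^{i-1}j\gamma_{i-1,j}\xi_j^k
 =\sum_{i=1}^{k-1}(i+1)\,\xi_i^k\!\sum_{j=1}^{i}j\gamma_{i,j}\xi_j^k.
\end{align*}
Combining with the second sum in \eqref{TDSDE} produces $\sum_{i=1}^{k-1}\xi_i^k\sum_{j=1}^{i}j\gamma_{i,j}\xi_j^k$ minus a manifestly non-positive boundary term at $i=k$. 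Using the symmetry \eqref{ASSUM1} and the interchange of order of summation over the triangle $\{1\le j\le i\le k-1\}$, this first quantity rewrites as $\sum_{i=1}^{k-1}\sum_{j=i}^{k-1} i\gamma_{i,j}\xi_i^k\xi_j^k$, which is dominated term-by-term by the third (loss) sum from \eqref{TDSDE} after multiplication by $i$. The resulting inequality is
\begin{align*}
\frac{d}{dt}\sum_{i=1}^k i\,\xi_i^k(t) \le 0,
\end{align*}
and integration gives \eqref{TMC}. The hard part here is keeping the bookkeeping straight: one has to match the gain term after the shift against the two loss terms, and the symmetry of $\gamma$ is essential in rewriting the $j\le i$ summation into a $j\ge i$ one so it can be absorbed into the last sum of \eqref{TDSDE}.

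Finally, I would use \eqref{TMC} to rule out blow-up. Since all components are non-negative and $i\xi_i^k(t)\le \sum_{\ell=1}^k \ell\xi_\ell^k(t)\le \|\xi^{in}\|_{\Xi_1}$, the solution remains in a bounded subset of $\mathbb{R}_+^k$ on $[0,T_{\max})$. The standard continuation criterion for ODE then forces $T_{\max}=\infty$, and the solution extends to a unique $\mathcal{C}^1([0,T],\mathbb{R}^k)$ non-negative solution for every $T>0$, as claimed.
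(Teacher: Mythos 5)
Your argument is correct and complete; note that the paper itself does not prove this lemma at all but simply imports it from \cite[Lemma 13]{BAG 2005}, so what you have written is a self-contained substitute for that citation. All three steps check out: the right-hand side of \eqref{TDSDE} is quadratic in $(\xi_1^k,\dots,\xi_k^k)$, so Picard--Lindel\"of gives a unique maximal $\mathcal{C}^1$ solution; the quasi-positivity observation is the right one (though to make the invariance of $\mathbb{R}_+^k$ fully rigorous one usually either invokes Nagumo's subtangent criterion explicitly or runs the standard device of solving the system with $\xi_j^k$ replaced by $(\xi_j^k)^+$ on the right-hand side and showing its solution stays non-negative); and the moment computation is exactly right. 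In particular your bookkeeping is sound: after the shift $i\mapsto i+1$ the gain term becomes $\sum_{i=1}^{k-1}(i+1)\xi_i^k\sum_{j=1}^{i}j\gamma_{i,j}\xi_j^k$, cancellation against the second loss term leaves $\sum_{i=1}^{k-1}\xi_i^k\sum_{j=1}^{i}j\gamma_{i,j}\xi_j^k$ plus a non-positive boundary term at $i=k$, and the symmetry-plus-Fubini rewriting
\begin{align*}
\sum_{i=1}^{k-1}\sum_{j=1}^{i}j\gamma_{i,j}\xi_i^k\xi_j^k=\sum_{i=1}^{k-1}\sum_{j=i}^{k-1}i\gamma_{i,j}\xi_i^k\xi_j^k\le\sum_{i=1}^{k}\sum_{j=i}^{k}i\gamma_{i,j}\xi_i^k\xi_j^k
\end{align*}
shows the remaining gain is absorbed by the third loss sum, yielding $\tfrac{d}{dt}\sum_{i=1}^{k}i\xi_i^k\le 0$ and hence \eqref{TMC}. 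The continuation argument via the a priori bound $0\le\xi_i^k(t)\le\sum_{\ell=1}^k\ell\,\xi_\ell^{in}$ then gives globality. This is presumably the same mechanism as in Bagland's proof, so there is no substantive divergence, only the difference between proving and citing.
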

Furthermore, if $(\psi_i)\in \mathbb{R}^k$, there holds 
\begin{align}
\sum_{i=1}^k \psi_i \frac{d\xi_i^k}{dt} = \sum_{i=1}^{k-1} \sum_{j=1}^{i}j \psi_{i+1} \gamma_{i,j} \xi_i^k \xi_j^k -\sum_{i=1}^k \sum_{j=1}^i (j\psi_i +\psi_j)\gamma_{i,j} \xi_i\xi_j. \label{GME}
\end{align} 

We first introduce some notation. We denote by $\mathcal{G}_1$ the set of non-negative and convex functions $G \in C^1([0,+\infty))\cap W_{\text{loc}}^{2,\infty}(0,+\infty)$ such that $G(0)=0$, $G'(0)\geq 0$ and $G'$ is a concave function. We next denote by $\mathcal{G}_{1, \infty}$  the set of functions $G \in \mathcal{G}_1$ satisfying, in addition,
\begin{align}
\lim_{\zeta \to +\infty} G'(\zeta) = \lim_{\zeta\to +\infty} \frac{G(\zeta)}{\zeta} = +\infty.
\end{align}  
\begin{remark} \label{remark1}
It is clear that $\zeta\mapsto \zeta^p$ belongs to $\mathcal{G}_1$ if $p\in [1,2]$ and to $\mathcal{G}_{1,\infty}$ if $p\in (1,2]$. 
\end{remark}	
 \cite[Proposition 7.1.9]{BLL 2019}.
\begin{lemma}
For $G \in \mathcal{G}_1$ and $i,j\ge 1$ there holds 
\begin{align}
(i+j)\big(G(i+j)-G(i) -G(j)\big) \le 2 \big(i G(j) +j G(i)\big). \label{GInequality}
\end{align}
\end{lemma}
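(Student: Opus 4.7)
The strategy is to bound the quantity $G(i+j)-G(i)-G(j)$ via the integral representation of $G$ through $G'$, and then to exploit the concavity of $G'$ in two complementary ways so that the symmetric combination produces exactly the right hand side of \eqref{GInequality}.

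First, using $G(0)=0$ and the fundamental theorem of calculus twice, I would write
\begin{align*}
G(i+j)-G(i)-G(j) \;=\; \int_{0}^{i}\bigl[G'(j+s)-G'(s)\bigr]\,ds.
\end{align*}
Since $G'$ is concave on $(0,+\infty)$, its second derivative (where it exists) is non-increasing, so $s\mapsto G'(j+s)-G'(s)$ is non-increasing on $[0,+\infty)$. Evaluating at $s=0$ and invoking $G'(0)\ge 0$, this yields
\begin{align*}
G(i+j)-G(i)-G(j)\;\le\; i\bigl(G'(j)-G'(0)\bigr)\;\le\; i\,G'(j),
\end{align*}
and by the same argument with the roles of $i$ and $j$ swapped, $G(i+j)-G(i)-G(j)\le j\,G'(i)$.

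Next, I would establish the pointwise inequality $xG'(x)\le 2G(x)$ for all $x>0$. Indeed, concavity of $G'$ together with $G'(0)\ge 0$ gives, for $s\in[0,x]$, $G'(s)=G'\bigl(\tfrac{s}{x}x+(1-\tfrac{s}{x})\cdot 0\bigr)\ge \tfrac{s}{x}G'(x)$, and integrating over $[0,x]$ produces $G(x)\ge \tfrac{x}{2}G'(x)$. Substituting this into the two bounds above yields
\begin{align*}
j\bigl[G(i+j)-G(i)-G(j)\bigr]\;\le\; 2i\,G(j),\qquad i\bigl[G(i+j)-G(i)-G(j)\bigr]\;\le\; 2j\,G(i),
\end{align*}
and adding these two inequalities gives \eqref{GInequality}.

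The only delicate point is the factor of $2$: it arises precisely from the bound $G'(x)\le 2G(x)/x$, which is where concavity of $G'$ gets used quantitatively rather than only qualitatively. Everything else amounts to standard manipulations with convex/concave functions on the half-line, so I do not foresee any genuine obstacle.
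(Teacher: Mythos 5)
Your proof is correct: the decomposition $G(i+j)-G(i)-G(j)=\int_0^i[G'(j+s)-G'(s)]\,ds$, the monotonicity of the increment coming from concavity of $G'$, the bound $xG'(x)\le 2G(x)$, and the final weighted summation all check out. The paper itself gives no proof of this lemma (it is quoted from \cite[Proposition~7.1.9]{BLL 2019}), and your argument is essentially the standard one used there, namely establishing $G(i+j)-G(i)-G(j)\le\min\{iG'(j),\,jG'(i)\}$ and then trading $G'$ for $2G(\cdot)/\cdot$ via concavity.
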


We may now state and prove the main result of this section.

\begin{prop} \label{prop1}
Consider $T\in(0,+\infty)$ and $G\in \mathcal{G}_1$. There exists a constant $\kappa(T)$ depending only on $A,G,\|\xi^{in}\|_{\Xi_1}$ and $T$ such that, for each $k\ge 3$, the solution $\xi^k$ to \eqref{TDSDE}--\eqref{TIC} given by Lemma \ref{LEM 1} satisfies
\begin{align}
\sum_{i=1}^k G(i) \xi_i^k(t) \le \kappa(T) \sum_{i=1}^k G(i) \xi_i^{in}, \qquad t\in[0,T]. \label{TPOM}
\end{align}
\end{prop}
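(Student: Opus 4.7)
\medskip

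\noindent\textbf{Plan of proof.} The natural strategy is to test the truncated system against the sequence $(G(i))_{1\le i\le k}$ via the generalized moment identity \eqref{GME} and close a Gronwall-type estimate. Setting $\psi_i = G(i)$ in \eqref{GME} and regrouping the double sums, the $i=k$ contribution of the loss term is non-positive and can be discarded, so that
\begin{align*}
\frac{d}{dt}\sum_{i=1}^k G(i) \xi_i^k \le \sum_{i=1}^{k-1} \sum_{j=1}^{i} \bigl[j\bigl(G(i+1)-G(i)\bigr) - G(j)\bigr]\gamma_{i,j}\,\xi_i^k \xi_j^k.
\end{align*}
The aim is to bound the bracket by a multiple of $(iG(j)+jG(i))/(i+j)$, after which the growth assumption \eqref{ASSUM2} eats the $i+j$ factor and leaves a product of moments.

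The bracket will be controlled in two steps. First, since $G$ is convex and $j\ge 1$ is an integer, summing the monotone increments $G(i+\ell)-G(i+\ell-1)$ over $\ell=1,\dots,j$ yields $j(G(i+1)-G(i)) \le G(i+j) - G(i)$, hence
\begin{align*}
j\bigl(G(i+1)-G(i)\bigr) - G(j) \le G(i+j) - G(i) - G(j).
\end{align*}
Second, the already available inequality \eqref{GInequality} gives $G(i+j)-G(i)-G(j) \le 2(iG(j)+jG(i))/(i+j)$.

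Combining these two bounds with $\gamma_{i,j}\le A(i+j)$ and then dropping the restriction $j\le i$ (all summands are non-negative) one gets
\begin{align*}
\frac{d}{dt}\sum_{i=1}^k G(i) \xi_i^k \le 2A \sum_{i=1}^{k}\sum_{j=1}^{k} \bigl(iG(j)+jG(i)\bigr)\xi_i^k\xi_j^k = 4A\Bigl(\sum_{i=1}^k i\xi_i^k\Bigr)\Bigl(\sum_{j=1}^k G(j)\xi_j^k\Bigr).
\end{align*}
The first factor is bounded by $\|\xi^{in}\|_{\Xi_1}$ by the truncated mass estimate \eqref{TMC} of Lemma \ref{LEM 1}. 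Writing $\phi_k(t):=\sum_{i=1}^k G(i)\xi_i^k(t)$, this reads $\phi_k' \le 4A\|\xi^{in}\|_{\Xi_1}\,\phi_k$, and Gronwall's lemma delivers \eqref{TPOM} with $\kappa(T) = \exp\bigl(4A\|\xi^{in}\|_{\Xi_1}\,T\bigr)$.

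The only delicate point is the first inequality above: one needs to replace the discrete increment $j(G(i+1)-G(i))$, which naturally arises from the DSDCE (each collision with a $j$-mer produces $j$ one-step transitions), by the true secant $G(i+j)-G(i)$ so as to be in a position to apply \eqref{GInequality}. Convexity makes this replacement essentially free; everything else is bookkeeping and a standard Gronwall argument. Note that the assumption $G\in\mathcal{G}_1$ is used only through convexity and non-negativity, so the proof does not require the additional superlinearity encoded in $\mathcal{G}_{1,\infty}$, which is consistent with the statement.
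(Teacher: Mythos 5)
Your proof is correct and follows essentially the same route as the paper: test \eqref{GME} with $\psi_i=G(i)$, bound $j(G(i+1)-G(i))-G(j)$ by $G(i+j)-G(i)-G(j)$ via convexity, apply \eqref{GInequality} together with \eqref{ASSUM2} and \eqref{TMC}, and conclude by Gronwall. Your telescoping-increments argument for the convexity step is a minor variant of the paper's convex-combination identity, and your ordering (bounding the bracket by a non-negative quantity before invoking $\gamma_{i,j}\le A(i+j)$) is in fact slightly cleaner.
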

\begin{proof}
For $k\ge 3$ and $t\in[0,T]$ we put 
\begin{align*}
M_G^k =\sum_{i=1}^k G(i) \xi_i^k(t).
\end{align*}
We infer from \eqref{ASSUM2} and \eqref{GME} that 
\begin{align*}
\frac{dM_G^k}{dt} \le A \sum_{i=1}^{k}\sum_{j=1}^i (i+j) \big[j((G(i+1)-G(i)) -G(j)\big] \xi_i^k \xi_j^k.
\end{align*}
Now using the convexity of $G$, we obtain
\begin{align*}
j((G(i+1)-G(i)) -G(j)=& jG\Big(\frac{1}{j}(i+j)+\frac{j-1}{j}i\Big)-j G(i)- G(j)\\
&\le j\Big[\frac{1}{j} G(i+j)+\frac{j-1}{j}G(i)\Big]-j G(i)- G(j)\\
&\le G(i+j) - G(i)-G(j).
\end{align*}
The above inequality and \eqref{GInequality} now yield
\begin{align*}
\frac{dM_G^k}{dt} \le& 2A \sum_{i=1}^{k}\sum_{j=1}^i [iG(j) +jG(i)]\xi_i^k \xi_j^k\\
& \le A \|\xi^{in}\|_{\Xi_1} M_G^k,
\end{align*}
which yields \eqref{TPOM} by the Gronwall lemma.
\end{proof}

Next, we recall the following lemma from \cite[Lemma 14]{BAG 2005} which provide the time equicontinuity of $(\xi_i^k)_{k\ge i}$.

\begin{lemma}
Let $i\ge 1$. There exists a constant $\varpi_i$, depending only upon $\|\xi^{in}\|_{\Xi_1}$ and $i$ such that, for each $k \ge i$ 
\begin{align}
\Big|\frac{d\xi_i^k}{dt} \Big| \le \varpi_i, \qquad t\in[0,+\infty). \label{DERVBND}
\end{align}
\end{lemma}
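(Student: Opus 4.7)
The plan is to estimate each of the three terms on the right-hand side of \eqref{TDSDE} separately in absolute value, using only the growth assumption \eqref{ASSUM2} on $\gamma_{i,j}$ together with the truncated first-moment bound \eqref{TMC} from Lemma~\ref{LEM 1}. Since all $\xi_j^k(t)$ are non-negative, bounding $|d\xi_i^k/dt|$ amounts to bounding each term on its own.

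For the first term $\xi_{i-1}^k \sum_{j=1}^{i-1} j\gamma_{i-1,j} \xi_j^k$, the sum is finite and $j\le i-1$, so I would use \eqref{ASSUM2} in the form $\gamma_{i-1,j}\le A(i-1+j)\le 2A(i-1)$. This gives
\begin{align*}
\xi_{i-1}^k \sum_{j=1}^{i-1} j\gamma_{i-1,j} \xi_j^k \le 2A(i-1)\xi_{i-1}^k \sum_{j=1}^{i-1} j\xi_j^k \le 2A(i-1)\,\xi_{i-1}^k\,\|\xi^{in}\|_{\Xi_1},
\end{align*}
where the last step uses \eqref{TMC}. Since $(i-1)\xi_{i-1}^k\le \|\xi^{in}\|_{\Xi_1}$ (for $i\ge 2$; the term vanishes for $i=1$), this is controlled by $2A\|\xi^{in}\|_{\Xi_1}^2$. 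The second term $\xi_i^k \sum_{j=1}^{i} j\gamma_{i,j}\xi_j^k$ is handled analogously via $\gamma_{i,j}\le 2Ai$ for $j\le i$, yielding a bound of the form $C(i)\|\xi^{in}\|_{\Xi_1}^2$.

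The only possibly delicate point is the infinite tail $\sum_{j=i}^{k} \gamma_{i,j}\xi_i^k \xi_j^k$, since the sum runs over indices growing with $k$. The key observation is that in this range $j\ge i$, hence \eqref{ASSUM2} gives $\gamma_{i,j}\le A(i+j)\le 2Aj$. Therefore
\begin{align*}
\sum_{j=i}^{k} \gamma_{i,j}\xi_i^k \xi_j^k \le 2A\xi_i^k \sum_{j=i}^{k} j\xi_j^k \le 2A\xi_i^k \|\xi^{in}\|_{\Xi_1},
\end{align*}
and $\xi_i^k\le \|\xi^{in}\|_{\Xi_1}/i$, so this tail is also bounded independently of $k$ and $t$.

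Adding these three estimates produces a constant $\varpi_i$ that depends only on $A$, $i$, and $\|\xi^{in}\|_{\Xi_1}$, and gives \eqref{DERVBND} for every $t\in[0,+\infty)$. The main (in fact only) subtlety is recognizing that the condition $j\ge i$ in the tail sum lets us absorb the $\gamma_{i,j}\le A(i+j)$ bound into a pure first-moment estimate $\sum_j j\xi_j^k$, so that no higher moment is needed despite the kernel's linear growth.
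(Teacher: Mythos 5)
Your proof is correct. The paper does not actually prove this lemma—it simply recalls it from \cite[Lemma 14]{BAG 2005}—and your term-by-term estimate is precisely the standard argument one would supply: each of the three sums in \eqref{TDSDE} is controlled by the first-moment bound \eqref{TMC} once \eqref{ASSUM2} is reduced to $\gamma_{i,j}\le 2A\max\{i,j\}$ in the relevant index range, and the tail sum over $j\ge i$ is absorbed into $\sum_j j\xi_j^k$ exactly as you observe. Your bound $\varpi_i \le 6A\|\xi^{in}\|_{\Xi_1}^2$ is in fact uniform in $i$, which is slightly stronger than what the lemma asserts.
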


\section{\textbf{Existence of Solutions}} \label{SEC4}

Now we are in a position to prove Theorem \ref{MT}. As a prelude to this, we first recall a refined version of the de la Vallee-Poussin theorem for integrable functions \cite[Theorem 7.1.6]{BLL 2019}.
\begin{theorem} \label{DlVPthm}
Let $(\Sigma,\mathcal{A}, \nu)$ be a measured space and consider a function $\xi \in L^1(\Sigma,\mathcal{A}, \nu)$. Then there exists a function $ G\in \mathcal{G}_{1,\infty}$  such that
\begin{align*}
G(|\xi|) \in L^1(\Sigma,\mathcal{A}, \nu).
\end{align*}
\end{theorem}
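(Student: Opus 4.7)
The plan is to use the layer cake formula to reduce the problem to constructing an appropriate increasing concave function $g$, which will serve as $G'$. Set $\phi(r) := \nu(\{|\xi| > r\})$ for $r \geq 0$; this distribution function is non-increasing, and the layer cake identity gives $\int_0^\infty \phi(r)\, dr = \int_\Sigma |\xi|\, d\nu < \infty$. For any non-negative locally integrable $g$ with antiderivative $G(\zeta) = \int_0^\zeta g(s)\, ds$, Tonelli's theorem yields
\begin{align*}
\int_\Sigma G(|\xi|)\, d\nu = \int_0^\infty g(s) \phi(s)\, ds.
\end{align*}
Hence the problem reduces to constructing a concave, non-decreasing $g : [0,\infty) \to [0,\infty)$ with $g(s) \to \infty$ as $s \to \infty$ and $\int_0^\infty g(s) \phi(s)\, ds < \infty$.

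I would build such a $g$ as a continuous piecewise linear function whose nodes $\{r_n\}_{n\geq 0}$ are chosen inductively. Set $r_0 = 0$; given $r_0, \ldots, r_n$, use $\int_{r}^\infty \phi\, dr \to 0$ as $r \to \infty$ to pick $r_{n+1}$ large enough that
\begin{align*}
r_{n+1} - r_n \geq r_n - r_{n-1} \qquad \text{and} \qquad \int_{r_{n+1}}^\infty \phi(r)\, dr \leq 2^{-(n+1)}.
\end{align*}
The first condition makes the successive slopes $1/(r_{n+1}-r_n)$ non-increasing. Define $g$ by $g(r_n) = n$ and linear interpolation on each $[r_n, r_{n+1}]$; then $g$ is continuous, non-decreasing, concave, with $g(0) = 0$ and $g(s) \to \infty$. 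Writing $b_n := \int_{r_n}^\infty \phi(r)\, dr$ and bounding $g \leq n+1$ on $[r_n, r_{n+1}]$, summation by parts yields
\begin{align*}
\int_0^\infty g(s) \phi(s)\, ds \leq \sum_{n \geq 0} (n+1)(b_n - b_{n+1}) = \sum_{n\geq 0} b_n < \infty,
\end{align*}
since $b_0 = \|\xi\|_{L^1}$ is finite and $b_n \leq 2^{-n}$ for $n \geq 1$.

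Finally, define $G(\zeta) := \int_0^\zeta g(s)\, ds$. Then $G \in \mathcal{C}^1([0,\infty))$ because $g$ is continuous, and $G \in W^{2,\infty}_{\mathrm{loc}}(0,\infty)$ because $G'' = g'$ is piecewise constant, hence bounded on compacts of $(0,\infty)$. One has $G(0)=0$ and $G'(0) = g(0) = 0$; convexity of $G$ follows from monotonicity of $g$, and concavity of $G' = g$ holds by construction. Since $g$ is non-decreasing, the bound $G(\zeta) \geq \int_{\zeta/2}^\zeta g(s)\, ds \geq \tfrac{\zeta}{2}\, g(\zeta/2)$ forces $G(\zeta)/\zeta \to \infty$, while $G'(\zeta) = g(\zeta) \to \infty$ is immediate. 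Thus $G \in \mathcal{G}_{1,\infty}$, and the layer cake identity delivers $G(|\xi|) \in L^1(\Sigma,\mathcal{A},\nu)$.

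The main technical point is the simultaneous control in the construction of $\{r_n\}$: concavity forces the gaps $r_{n+1}-r_n$ to be non-decreasing, which means $g$ grows only very slowly once the gaps become large, while integrability of $g\phi$ demands that the tails $b_n$ decay fast. These requirements are compatible because enlarging a candidate $r_{n+1}$ can only shrink the tail $b_{n+1}$, so both inequalities can be satisfied at every step.
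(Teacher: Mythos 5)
Your proof is correct. Note that the paper itself does not prove this statement: it is quoted verbatim as \cite[Theorem 7.1.6]{BLL 2019} (the refined de la Vall\'ee--Poussin theorem), so there is no internal argument to compare against. What you supply is a complete, self-contained proof, and it follows the standard route for this refinement: reduce via the layer-cake/Tonelli identity $\int_\Sigma G(|\xi|)\,d\nu=\int_0^\infty G'(s)\,\nu(\{|\xi|>s\})\,ds$ to building a non-decreasing concave $g=G'$ with $g(0)=0$, $g\to\infty$, and $\int_0^\infty g\phi\,ds<\infty$, then construct $g$ piecewise linearly with widening gaps so that concavity and tail integrability are obtained simultaneously (the cited source argues instead with the integrals $\int_{\{|\xi|\ge m\}}|\xi|\,d\nu$, but the mechanism is the same). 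All the verifications of membership in $\mathcal{G}_{1,\infty}$ --- $C^1$ regularity, $W^{2,\infty}_{\mathrm{loc}}$ from the piecewise-constant $g'$, convexity of $G$, concavity of $G'$, and superlinearity via $G(\zeta)\ge\tfrac{\zeta}{2}g(\zeta/2)$ --- are sound. Two cosmetic points: the gap condition $r_{n+1}-r_n\ge r_n-r_{n-1}$ is vacuous at $n=0$ (there is no $r_{-1}$), so you should start it at $n\ge 1$; and the summation by parts gives $\sum_{n=0}^N(n+1)(b_n-b_{n+1})=\sum_{n=0}^N b_n-(N+1)b_{N+1}\le\sum_{n=0}^N b_n$, so the displayed equality should be an inequality (or justified by $(N+1)b_{N+1}\le (N+1)2^{-(N+1)}\to 0$, which your construction does guarantee). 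Neither affects the validity of the argument.
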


\begin{proof}[Proof of Theorem \ref{MT}]
We next apply Theorem~\ref{DlVPthm}, with $\Sigma =\mathbb{N}$ and $\mathcal{A} = 2^{\mathbb{N}}$, the set of all subsets of $\mathbb{N}$. Defining the measure $\nu$ by
\begin{align*}
	\nu(J) =\sum_{i\in J} \xi_i^{\rm{in}},\qquad J \subset \mathbb{N},
\end{align*}
the condition $w^{in}\in Y_1^+$  ensures that $\zeta \mapsto \zeta$  belongs to $L^1(\Sigma,\mathcal{A}, \nu)$. By Theorem~\ref{DlVPthm} there is thus a function $G_0 \in \mathcal{G}_{1,\infty}$ such that $\zeta \mapsto G_0(\zeta)$  belongs to $L^1(\Sigma,\mathcal{A}, \nu)$, that is
\begin{align}
	\mathcal{G}_0 =\sum_{i=1}^{\infty} G_0(i) \xi_i^{\rm{in}} <\infty. \label{Gzero}
\end{align}

In the following we denote by $C$ be a positive constant depending only on $A, \|\xi^{in}\|_{Y_1}$, $G_0$ and $\mathcal{G}_0$. The dependence of $C$ on any additional parameters will be explicitly stated.
By \eqref{TMC} and \eqref{DERVBND}, we infer that the sequence $(\xi_i^k)_{k\ge i}$ is bounded in $W^{1,1}(0,T)$ for each $i\ge 1$ and $T\in(0,+\infty)$. Using the Helly theorem \cite[pp. 372--374]{KF 1970}, we can conclude that there is a subsequence of $(\xi_i^k)_{k\ge i}$ (which we still refer to as $(\xi_i^k){k\geq i}$) and a sequence $\xi=(\xi_i){i\geq 1}$ of functions of locally bounded variation such that
\begin{align}
\lim_{k \to +\infty} \xi_i^k(t) = \xi_i(t) \label{xiLIM}
\end{align}
for each $i\ge 1$ and $t\ge 0$. Clearly $\xi_i(t) \ge 0$ for $i \ge 1$ and $t\ge 0$ and it follows from \eqref{xiLIM} and \eqref{TMC} that $\xi(t) \in \Xi_1^+$ with 
\begin{align}
\|\xi(t)\|_{\Xi_1} \le \|\xi^{in}\|_{\Xi_1}, \qquad  t\ge 0.\label{Y1NORMBOUND}
\end{align}
In addition, since $G_0$ belongs to $\mathcal{G}_{1,\infty}$, we can deduce from \eqref{Gzero} and Proposition~\ref{prop1} that for every $t\geq 0$ and $k\geq 3$, there holds

 	\begin{align}
 	\sum_{i=1}^k G_0(i) \xi_i^k(t) \leq \Lambda(T), \label{G_0bound1k}
 	\end{align}

 	Consider now $T\in (0,+\infty)$ and $m\geq 2$. By \eqref{G_0bound1k},  we have for, $k \geq q+1$ and $t\in[0,T]$
 	\begin{align*}
 	\sum_{i=1}^q G_0(i) \xi_i^l(t) \leq \omega(T).
 	\end{align*}

 	Due to \eqref{xiLIM} we may pass to the limit as $k\to \infty$ in the above estimates, which implies that they both remain valid with $\xi_i^k$ being replaced by $\xi_i$. We next allow $q \to \infty$ and get
\begin{align}
 	\sum_{i=1}^{\infty} G_0(i) \xi_i(t) \leq \omega(T),  \qquad t \in [0,T].\label{G_0bound1}
 	\end{align}
As a result of \eqref{ASSUM2} and \eqref{Y1NORMBOUND} we get that, for each $i \ge  1$,
\begin{align}
\sum_{j=i}^{\infty} \gamma_{i,j} \xi_j \in L^1(0,T). \label{INTEGRABILITY}
\end{align}
We now claim that for each $i \ge 1$, there holds
\begin{align}
\lim_{k\to +\infty} \Big| \sum_{j=i}^{\infty} \gamma_{i,j} \xi_i^k \xi_j^k - \sum_{j=i}^{\infty} \gamma_{i,j} \xi_i \xi_j\Big|_{L^1(0,T)} = 0. \label{CONVERGENCE}
\end{align}

Let us consider now $i\ge 1$ and $q \ge i+1$. Using \eqref{TMC}, \eqref{xiLIM},  and \eqref{Y1NORMBOUND}, along with the Lebesgue dominated convergence theorem, we obtain that
\begin{align}
\lim_{k\to +\infty} \Big| \sum_{j=i}^q a_{i,j} (\xi_i^k \xi_j^k - \xi_i \xi_j )\Big|_{L^1(0,T)} =0. \label{EST11}
\end{align}
Also, we infer from \eqref{ASSUM2}, \eqref{TMC} and \eqref{G_0bound1k} that for each $k \ge m+1$ , 
\begin{align*}
\Big |\sum_{j=q+1}^k \gamma_{i,j} \xi _i^k \xi_j^k\Big|_{L^1(0,T)} \le& Ai\|\xi^{in}\|_1 \Big| \sum_{j=q+1}^kj \xi_j^k\Big|_{L^1(0,T)}\\
& \le \omega(i,T) \sup_{j\ge q} \frac{j}{G_0(j)} \Big| \sum_{j=q+1}^{k} G_0(j) \xi_j^k\Big|_{L^1(0,T)},
\end{align*}
\begin{align}
\Big |\sum_{j=q+1}^k \gamma_{i,j} \xi _i^k \xi_j^k\Big|_{L^1(0,T)} \le \omega(i,T) \sup_{j\ge q}\frac{j}{G_0(j)}. \label{EST12} 
\end{align}
Similarly, \eqref{ASSUM2}, \eqref{Y1NORMBOUND} and \eqref{G_0bound1} entails that
\begin{align}
\Big |\sum_{j=q+1}^{\infty}  \gamma_{i,j} \xi _i \xi_j \Big|_{L^1(0,T)} \le \omega(i,T) \sup_{j\ge q}\frac{j}{G_0(j)}. \label{EST13}
\end{align}
Combining \eqref{EST11}--\eqref{EST13}, we obtain 
\begin{align*}
\limsup_{k\to +\infty} \Big| \sum_{j=i}^k  \gamma_{i,j} \xi _i^k \xi_j^k- \sum_{j=i}^{\infty}  \gamma_{i,j} \xi _i \xi_j\Big|_{L^1(0,T)} \le \omega(i,T) \sup_{j\ge q}\frac{j}{G_0(j)},
\end{align*}
for every $q \ge i+1$. Recalling that $G_0$ belongs to $\mathcal{G}_{1,\infty}$, we can observe that the right-hand side of the above inequality converges to zero as $ q\to +\infty$, as a result, we obtain \eqref{CONVERGENCE}.
With the help of \eqref{TMC}, \eqref{xiLIM}, \eqref{Y1NORMBOUND} and \eqref{CONVERGENCE}, we can easily verify that $\xi_i$ satisfies Definition \ref{DEF1}(b) for each $i \ge 1$. By making use of \eqref{INTEGRABILITY}, the continuity of $\xi_i$ then follows and we have thus shown that $\xi = (\xi_i)$ is a solution to \eqref{DSDE}--\eqref{IC} on $[0,+\infty)$. In order to complete the proof of Theorem \ref{MT}, it remains to prove that \eqref{MC} holds true. Let $t \in (0,+\infty)$. For $ k\ge q \ge 3$, we have \eqref{TMC} that \begin{align*}
|\|\xi(t)\|_1 -\|\xi^{in}\|_1| \le \sum_{i=1}^q i |\xi_i^k(t) -\xi_i^{in}|+ \sum_{i=k+1}^{\infty} i \xi_i^{in} + \sum_{i=q+1}^k i \xi_i^k (t) +  \sum_{i=q+1}^{\infty} i \xi_i (t).
\end{align*}
Subsequently, it can be deduced from \eqref{G_0bound1k} and \eqref{G_0bound1} that
\begin{align*}
|\|\xi(t)\|_1 -\|\xi^{in}\|_1| \le \sum_{i=1}^q i |\xi_i^k(t) -\xi_i^{in}|+ \sum_{i=k+1}^{\infty} i \xi_i^{in} +\omega(T) \sup_{i\ge q} \frac{i}{G_0(i)}.
\end{align*}
Since $\xi\in \Xi_1^+$, we can infer from \eqref{xiLIM} that
\begin{align*}
|\|\xi_i(t)\|_1 -\|\xi_i^{in}\|_1| \le \omega(T) \sup_{i\ge q} \frac{i}{G_0(i)}.
\end{align*}
By noting that $G_0 \in \mathcal{G}_{1,\infty}$, it follows that $\|\xi(t)\|_1 = \|\xi^{in}\|_1$, and the proof of Theorem \ref{MT} is complete.
\end{proof}

Next, we deduce that the solution constructed in Theorem \ref{MT} is, in fact, first-order differentiable.
\begin{corollary}
Assume that the assumptions \eqref{ASSUM1}--\eqref{ASSUM2} are fulfilled. Let $\xi^{in}\in \Xi_1^+$ and consider the solution $\xi=(\xi_i)_{i\ge 1}$ to \eqref{DSDE}--\eqref{IC} on $[0,+\infty)$ given by Theorem \ref{MT}. Then $\xi_i$ is continuously differentiable on $[0,+\infty)$ for each $i \in \mathbb{N}$.
\end{corollary}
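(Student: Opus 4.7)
The plan is to return to the integral formulation in Definition~\ref{DEF1}(b) and show that the integrand on the right-hand side is a continuous function of $\tau\in[0,T]$ for every $T\in(0,+\infty)$. Once that is established, the fundamental theorem of calculus will yield $\xi_i\in \mathcal{C}^1([0,T])$ with
\[
\frac{d\xi_i}{dt}(t)=\xi_{i-1}(t)\sum_{j=1}^{i-1}j\gamma_{i-1,j}\xi_j(t)-\xi_i(t)\sum_{j=1}^{i}j\gamma_{i,j}\xi_j(t)-\sum_{j=i}^{\infty}\gamma_{i,j}\xi_i(t)\xi_j(t),
\]
and, since the right-hand side is itself continuous in $t$, the derivative is continuous on $[0,T]$. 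As $T$ is arbitrary, $\xi_i\in \mathcal{C}^1([0,+\infty))$.

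The two finite gain/loss sums pose no difficulty, because each $\xi_j$ is continuous on $[0,+\infty)$ by Theorem~\ref{MT}, so these are continuous functions of $\tau$. The real work is to verify the continuity of the tail series $\tau\mapsto \sum_{j=i}^{\infty}\gamma_{i,j}\xi_i(\tau)\xi_j(\tau)$. I would argue this by establishing uniform convergence of the partial sums on $[0,T]$. Using \eqref{ASSUM2} and the fact that $j\ge i$ in the sum, the tail beyond $q\ge i+1$ is controlled by
\[
\sum_{j=q+1}^{\infty}\gamma_{i,j}\xi_i(\tau)\xi_j(\tau)\le 2A\,\xi_i(\tau)\sum_{j=q+1}^{\infty}j\,\xi_j(\tau)\le 2A\,\|\xi^{in}\|_{\Xi_1}\,\sup_{j\ge q}\frac{j}{G_0(j)}\sum_{j=q+1}^{\infty}G_0(j)\xi_j(\tau).
\]
By \eqref{Y1NORMBOUND} and \eqref{G_0bound1}, the last factors are bounded uniformly in $\tau\in[0,T]$, and since $G_0\in\mathcal{G}_{1,\infty}$ the supremum $\sup_{j\ge q}j/G_0(j)$ tends to zero as $q\to\infty$. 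Hence the series converges uniformly on $[0,T]$.

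Each individual term $\tau\mapsto \gamma_{i,j}\xi_i(\tau)\xi_j(\tau)$ is continuous on $[0,T]$, and uniform limits of continuous functions are continuous, so the tail series represents a continuous function. Combining this with the continuity of the finite sums shows that the integrand in Definition~\ref{DEF1}(b) is continuous on $[0,T]$, which upgrades $\xi_i$ from an absolutely continuous function to a continuously differentiable one on $[0,T]$.

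The main technical point is really just the uniform convergence step, and the key tool is the moment estimate \eqref{G_0bound1} supplied by the de la Vall\'ee--Poussin function $G_0\in\mathcal{G}_{1,\infty}$; the superlinear growth of $G_0$ is exactly what allows one to dominate the first moment tail by a quantity that vanishes as $q\to\infty$. Everything else reduces to the continuity of the individual $\xi_j$ coming from Theorem~\ref{MT} and the linear growth bound on $\gamma_{i,j}$.
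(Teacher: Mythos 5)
Your proposal is correct and follows essentially the same route as the paper: both arguments hinge on controlling the first-moment tail $\sum_{j>q} j\,\xi_j(\tau)$ uniformly in $\tau$ via the superlinear moment bound \eqref{G_0bound1} for $G_0\in\mathcal{G}_{1,\infty}$, and then concluding that the right-hand side of \eqref{DSDE} is continuous in time so that the integral formulation upgrades to $\mathcal{C}^1$. The only (cosmetic) difference is that the paper packages the tail estimate as continuity of $t\mapsto\xi(t)$ in the $\Xi_1$-norm and then bounds the variation of the series by $2A\|\xi(\tau_2)-\xi(\tau_1)\|_1$, whereas you phrase it as uniform convergence of the series on $[0,T]$.
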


\begin{proof}
For $\tau_1, \tau_2 \in [0,T]$ and $q\ge 1$, 
\begin{align*}
\|\xi(\tau_1) -\xi(\tau_2)\|_1 \le& \sum_{i=1}^q i|\xi_i(\tau_1) -\xi_i(\tau_2)| +\sum_{i=q+1}^{\infty} i (\xi_i(\tau_1) + \xi_i(\tau_2) )\\
&\le \sum_{i=1}^q i|\xi_i(\tau_1) -\xi_i(\tau_2)| +\sup_{i\ge q} \frac{i}{G(i)} \sum_{j=q+1}^{\infty} G(i)(\xi_i(\tau_1)+\xi_i(\tau_2))
\end{align*}

As $\xi_i$ is a continuous function for $i\in\{1,2,\cdots,q\}$, we can infer from the above inequality that
\begin{align*}
\limsup_{\tau_1 \to \tau_2}\|\xi(\tau_2) -\xi(\tau_1)\|_1 \le&~~ \sup_{i\ge q} \frac{i}{G_0(i)} \sum_{j=q+1}^{\infty} G_0(i)(\xi_i(\tau_1)+\xi_i(\tau_2))
\end{align*}

By recalling \eqref{G_0bound1} and considering that $G_0$ belongs to $\mathcal{G}_1$, we can take the limit as $q$ approaches infinity, leading to the following result:

Since $G_0 \in \mathcal{G}_1$, recalling \eqref{G_0bound1}, we take the limit as $q\to +\infty$ to obtain
\begin{align*}
\lim_{\tau_1 \to \tau_2}\|\xi(\tau_2) - \xi(\tau_1)\|_1=0
\end{align*}
Next, for $0\le \tau_1 \le \tau_2 <T$ and $i \ge 1$
\begin{align*}
\Bigg| \sum_{j=i}^{\infty} \gamma_{i,j}  \xi_j(\tau_2) - \sum_{j=i}^{\infty} \gamma_{i,j}  \xi_j(\tau_1)\Bigg|\le 2A \|\xi(\tau_2) - \xi(\tau_1)\|_1
\end{align*}
from which the time continuity of $\sum_{j=i}^{\infty} \gamma_{i,j} \xi_j$ follows. It is evident from this that the right-hand side of \eqref{DSDE} is continuous in time, implying the continuity of the derivative of $\xi_i$. Consequently, this ensures the existence of the classical solution.
\end{proof}

We end this section by demonstrating that when the initial data belongs to a certain suitable class and has a finite moment, there exists at least one solution to equations \eqref{DSDE}--\eqref{IC} that maintains the same property for all times.

\begin{prop} \label{prop 3}
Consider $\xi^{\rm{in}} \in \Xi_1^+$ and assume that there is $G\in \mathcal{G}_1$ such that 
\begin{align}
\sum_{i=1}^{\infty} G(i) \xi_i^{\rm{in}}<+\infty. \label{GINMOM}
\end{align}
Then under the assumption \eqref{ASSUM1}--\eqref{ASSUM2} there is at least one solution $\xi$ to \eqref{DSDE}--\eqref{IC} on $[0,+\infty)$  satisfying \eqref{MC} for each $T\in(0,+\infty)$,
\begin{align}
\sup_{t \in [0,T]} \sum_{i=1}^{\infty} G(i) \xi_i(t) <+\infty. \label{GTMOM}
\end{align}
\end{prop}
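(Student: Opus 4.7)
The plan is to reduce \eqref{GTMOM} to a direct application of Proposition~\ref{prop1} combined with the pointwise convergence already established in the proof of Theorem~\ref{MT}, since mass conservation \eqref{MC} comes for free from that theorem. The novelty compared with Theorem~\ref{MT} is only that we now want to track a specific $G\in\mathcal{G}_1$ (not just the abstract $G_0\in\mathcal{G}_{1,\infty}$ furnished by de~la~Vall\'ee-Poussin), and a finiteness hypothesis on the corresponding initial moment is provided by assumption.

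First, I would invoke Theorem~\ref{MT} to produce a mass-conserving solution $\xi=(\xi_i)_{i\ge 1}$ on $[0,+\infty)$ obtained as the pointwise limit, along a subsequence (still denoted $(\xi_i^k)_{k\ge i}$), of the truncated solutions to \eqref{TDSDE}--\eqref{TIC}; recall this gives \eqref{xiLIM} for every $i\ge 1$ and $t\ge 0$, together with \eqref{MC}. All the previously obtained estimates on $\xi^k$ therefore remain available to us.

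Next, fix $T\in(0,+\infty)$. Since $G\in\mathcal{G}_1$, Proposition~\ref{prop1} applies with this particular $G$ and yields a constant $\kappa(T)$, depending only on $A$, $G$, $\|\xi^{\rm{in}}\|_{\Xi_1}$ and $T$, such that for every $k\ge 3$ and every $t\in[0,T]$,
\begin{align*}
\sum_{i=1}^{k} G(i)\,\xi_i^k(t) \le \kappa(T)\sum_{i=1}^{k} G(i)\,\xi_i^{\rm{in}} \le \kappa(T) \sum_{i=1}^{\infty} G(i)\,\xi_i^{\rm{in}},
\end{align*}
and the right-hand side is finite by hypothesis \eqref{GINMOM}. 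For any fixed $q\ge 1$ with $q\le k$, truncating the left-hand sum at $q$ and using the nonnegativity of $G(i)\xi_i^k$ gives
\begin{align*}
\sum_{i=1}^{q} G(i)\,\xi_i^k(t) \le \kappa(T)\sum_{i=1}^{\infty} G(i)\,\xi_i^{\rm{in}}.
\end{align*}

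Passing to the limit $k\to+\infty$ in the finite sum on the left via the pointwise convergence \eqref{xiLIM} preserves the bound, and then sending $q\to+\infty$ by monotone convergence yields
\begin{align*}
\sum_{i=1}^{\infty} G(i)\,\xi_i(t) \le \kappa(T)\sum_{i=1}^{\infty} G(i)\,\xi_i^{\rm{in}}, \qquad t\in[0,T],
\end{align*}
from which \eqref{GTMOM} follows by taking the supremum over $t\in[0,T]$. I do not foresee any genuine obstacle here: the only subtle point is that we must use \emph{the same} subsequence along which Theorem~\ref{MT} produced $\xi$, so that \eqref{xiLIM} is directly applicable; everything else is a mechanical transfer of the moment estimate from the truncated to the limiting solution, and mass conservation is inherited unchanged from Theorem~\ref{MT}.
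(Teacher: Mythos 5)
Your proposal is correct and follows essentially the same route as the paper: the paper's own proof simply notes that \eqref{GTMOM} follows from Proposition~\ref{prop1} together with the pointwise convergence \eqref{xiLIM} applied to the solution already constructed in Theorem~\ref{MT}. You have merely written out in full the truncate-at-$q$, pass to the limit in $k$, then let $q\to+\infty$ argument that the paper leaves implicit.
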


\begin{proof}
We only need to show that the solution  constructed in the proof of Theorem \ref{MT} enjoys the additional property \eqref{GTMOM}. But, as $G\in \mathcal{G}_1$, \eqref{GTMOM} follows at once from Proposition \ref{prop1} and \eqref{xiLIM}.
\end{proof}

\section{\textbf{Uniqueness of classical solution}}\label{SEC5}

We establish the following identity before proving the uniqueness theorem for the solutions in the space $\Xi_1^+$.
%%%%%%%%%%%%%%%%%%%%%%%%%%%%%%%%%%%%%%%%%%%%%%%%%%%%%%%%%%%%%%%%%%%%%%%%%%%%%%%%%%%%%%%%%%%%%%%%%%%%%%%%%%
\begin{lemma}\label{Lemma1} 
 Let $ T\in (0,+\infty)$ and $\xi=(\xi_{i})_{i\geq1} \in \Xi_1^+$ be a solution of \eqref{DSDE}--\eqref{IC}. Furthermore, suppose that $\Phi_{i}$ be a sequence having at most polynomial growth. Then, for all $t\in[0,T]$ and $q>1$, we have
\begin{align}\label{Lemma1equation}
\frac{d}{dt}\sum_{i=1}^{q}\Phi_{i}\xi_{i}(t)=&\sum_{P_{1}}j\Phi_{i+1} \gamma_{i,j} \xi_{i}(t)\xi_{j}(t)
-\sum_{P_{2}}(j\Phi_{i}+\Phi_{j}) \gamma_{i,j}  \xi_{i}(t)\xi_{j}(t)\nonumber\\
&-\sum_{P_{3}}\Phi_{j} \gamma_{i,j} \xi_{i}(t)\xi_{j}(t),
\end{align}
where
\begin{align*}
P_{1}=&\{(i,j): 1\leq i\leq q-1,~~ 1\leq j\leq i\},\\
P_{2}=&\{(i,j): 1\leq i\leq q, ~~1\leq j\leq i\},\\
P_{3}=&\{(i,j): i\geq q+1,~~ 1\leq j\leq q\}.
\end{align*}
\end{lemma}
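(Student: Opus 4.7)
The plan is to multiply the ODE \eqref{DSDE} by $\Phi_{i}$, sum from $i=1$ to $q$, and then regroup the resulting three double sums so that the index set associated with each is one of $P_{1},P_{2},P_{3}$. Because the left-hand side $\sum_{i=1}^{q}\Phi_{i}\xi_{i}$ is a finite sum of continuously differentiable functions (this uses Definition~\ref{DEF1}(b), which makes each $\xi_{i}$ absolutely continuous with a.e.\ derivative given by the right-hand side of \eqref{DSDE}), term-by-term differentiation is immediate, so the whole task reduces to rewriting
\[
\sum_{i=1}^{q}\Phi_{i}\Bigl[\xi_{i-1}\sum_{j=1}^{i-1}j\gamma_{i-1,j}\xi_{j}\;-\;\xi_{i}\sum_{j=1}^{i}j\gamma_{i,j}\xi_{j}\;-\;\sum_{j=i}^{\infty}\gamma_{i,j}\xi_{i}\xi_{j}\Bigr].
\]

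First I would handle the gain term: shifting $i\mapsto i+1$ converts $\sum_{i=1}^{q}\Phi_{i}\xi_{i-1}\sum_{j=1}^{i-1}j\gamma_{i-1,j}\xi_{j}$ into $\sum_{i=0}^{q-1}\Phi_{i+1}\xi_{i}\sum_{j=1}^{i}j\gamma_{i,j}\xi_{j}$, and the $i=0$ contribution vanishes because the inner sum is empty, yielding exactly $\sum_{P_{1}}j\Phi_{i+1}\gamma_{i,j}\xi_{i}\xi_{j}$. Next, the second loss term is already indexed over $\{(i,j):1\leq j\leq i\leq q\}=P_{2}$ and produces $\sum_{P_{2}}j\Phi_{i}\gamma_{i,j}\xi_{i}\xi_{j}$. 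For the third loss term I would split the inner sum at $j=q$:
\[
\sum_{i=1}^{q}\sum_{j=i}^{\infty}\Phi_{i}\gamma_{i,j}\xi_{i}\xi_{j}
=\sum_{i=1}^{q}\sum_{j=i}^{q}\Phi_{i}\gamma_{i,j}\xi_{i}\xi_{j}
+\sum_{i=1}^{q}\sum_{j=q+1}^{\infty}\Phi_{i}\gamma_{i,j}\xi_{i}\xi_{j},
\]
and swap the dummy indices $i\leftrightarrow j$ in each double sum, using the symmetry $\gamma_{i,j}=\gamma_{j,i}$ from \eqref{ASSUM1}. The first piece, whose index set $\{1\leq i\leq j\leq q\}$ maps to $\{1\leq j\leq i\leq q\}=P_{2}$, becomes $\sum_{P_{2}}\Phi_{j}\gamma_{i,j}\xi_{i}\xi_{j}$, and the second piece, whose index set maps to $\{i\geq q+1,\ 1\leq j\leq q\}=P_{3}$, becomes $\sum_{P_{3}}\Phi_{j}\gamma_{i,j}\xi_{i}\xi_{j}$. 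Collecting these contributions with the correct signs reproduces \eqref{Lemma1equation}.

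The only delicate point is justifying the rearrangements in the third loss term, since one of the pieces is an infinite double sum. This is where the polynomial-growth assumption on $\Phi_{i}$ enters: writing $|\Phi_{i}|\leq C(1+i)^{p}$ for some $p$, combining with \eqref{ASSUM2} and the mass-control estimate $\|\xi(t)\|_{\Xi_{1}}\leq\|\xi^{in}\|_{\Xi_{1}}$ from \eqref{Y1NORMBOUND}, I expect
\[
\sum_{i=1}^{q}\sum_{j=i}^{\infty}|\Phi_{i}|\,\gamma_{i,j}\,\xi_{i}\xi_{j}\;\leq\;A\,C\,(1+q)^{p}\sum_{i=1}^{q}\sum_{j=i}^{\infty}(i+j)\xi_{i}\xi_{j}\;\leq\;2A\,C\,(1+q)^{p}\,M_{0}(\xi)M_{1}(\xi),
\]
which is finite (here $M_{0}(\xi)\leq M_{1}(\xi)\leq\|\xi^{in}\|_{\Xi_{1}}$). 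Absolute convergence then legitimises the Fubini-type interchange and the index swap. This is the main (though still routine) obstacle; once it is in place the remainder is pure bookkeeping.
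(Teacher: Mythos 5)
Your proposal is correct and follows essentially the same route as the paper: multiply \eqref{DSDE} by $\Phi_i$, sum over $1\le i\le q$, reindex the gain term via $i\mapsto i+1$, and split the infinite loss sum at $j=q$ before swapping dummy indices with the symmetry \eqref{ASSUM1}. Your explicit absolute-convergence bound justifying the Fubini-type rearrangement of the infinite piece is a point the paper passes over silently, but it does not change the argument.
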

\begin{proof} From \eqref{DSDE}, we have
\begin{align*}
 \frac{d \xi_{i}(t) }{d t} = \xi_{i-1}(t)\sum_{j=1}^{i-1}j \gamma_{i-1,j} \xi_{j}(t) - \xi_{i}(t)\sum_{j=1}^{i}j \gamma_{i,j}  \xi_{j}(t) - \xi_{i}(t)\sum_{j=i}^{\infty}\gamma_{i,j} \xi_{j}(t).
\end{align*}
Multiplying by $\Phi_{i}$ in the above equation and then taking summation from $i=1$ to $i=q$ on both sides, we obtain
\begin{align}\label{Lemma1eq2}
 \frac{d}{d t}\sum_{i=1}^{q}\Phi_{i}\xi_{i}(t)= & \sum_{i=1}^{q}\sum_{j=1}^{i-1}j\Phi_{i} \gamma_{i-1,j}\xi_{i-1}(t) \xi_{j}(t) - \sum_{i=1}^{q}\sum_{j=1}^{i}j\Phi_{i}\gamma_{i,j}\xi_{i}(t)\xi_{j}(t)\nonumber\\ & - \sum_{i=1}^{q}\sum_{j=i}^{\infty}\Phi_{i}\gamma_{i,j}\xi_{i}(t)\xi_{j}(t).
\end{align}
% By using the change in order of summation to the first term and the last term on the rhs to \eqref{Lemma1eq2}, we get
 On changing the order of summation in the first  and the last term as the r.h.s. to \eqref{Lemma1eq2}, we get

\begin{align}\label{Lemma1eq3}
 \frac{d}{d t}\sum_{i=1}^{q}\Phi_{i} \xi_{i}(t)= &\sum_{j=1}^{q-1}\sum_{i=j+1}^{q}j \Phi_{i}\gamma_{i-1,j}\xi_{i-1}(t) \xi_{j}(t)
 -\sum_{i=1}^{q}\sum_{j=1}^{i}j\Phi_{i}\gamma_{i,j}\xi_{i}(t)\xi_{j}(t)\nonumber\\
 &-\sum_{j=1}^{q}\sum_{i=1}^{j}\Phi_{i}\gamma_{i,j}\xi_{i}(t)\xi_{j}(t)
-\sum_{j=q+1}^{\infty}\sum_{i=1}^{q}\Phi_{i}\gamma_{i,j}\xi_{i}(t)\xi_{j}(t).
 \end{align}

 Next, in the first summation, we replace $i-1$ with $i$, and in the last two summations on the r.h.s., we exchange $i$ and $j$ and use the symmetry of $\gamma_{i,j}$ to \eqref{Lemma1eq3}, to obtain
 
 \begin{align}\label{Lemma1eq4}
  \frac{d}{d t} \sum_{i=1}^{q}\Phi_{i} \xi_{i}(t) =&\sum_{j=1}^{q-1}\sum_{i=j}^{q-1}j \Phi_{i+1}\gamma_{i,j}\xi_{i}(t)\xi_{j}(t)
  -\sum_{i=1}^{q}\sum_{j=1}^{i}j\Phi_{i}\gamma_{i,j}\xi_{i}(t) \xi_{j}(t)\nonumber\\
  &-\sum_{i=1}^{q}\sum_{j=1}^{i}\Phi_{j}\gamma_{i,j}\xi_{i}(t)\xi_{j}(t)
  -\sum_{i=q+1}^{\infty}\sum_{j=1}^{q}\Phi_{j}\gamma_{i,j}\xi_{i}(t)\xi_{j}(t).
\end{align}

Again, by using a change in the order of summations to the first summation on the r.h.s. to \eqref{Lemma1eq4}, we have
\begin{align}\label{Lemma1eq5}
   \frac{d}{d t}\sum_{i=1}^{q}\Phi_{i} \xi_{i}(t)=&\sum_{i=1}^{q-1}\sum_{j=1}^{i}j \Phi_{i+1}\gamma_{i,j}\xi_{i}(t)\xi_{j}(t)
   -\sum_{i=1}^{q}\sum_{j=1}^{i}(j\Phi_{i}+\Phi_{j})\gamma_{i,j}\xi_{i}(t) \xi_{j}(t)\nonumber\\
   &-\sum_{i=q+1}^{\infty}\sum_{j=1}^{q}\Phi_{j}\gamma_{i,j}\xi_{i}(t) \xi_{j}(t).
\end{align}

which clearly implies that \eqref{Lemma1equation} holds.
\end{proof}

Now we are in a position to give the proof of the uniqueness of the solutions.

\begin{theorem}
Consider $\xi^{in}\in \Xi_1^+$ and assume that the coagulation kernel satisfies the assumptions \eqref{ASSUM1}. Assume further that there are $\delta \in [0,1]$ and $A_{\delta}>0$ such that
\begin{align}
\sum_{i=1}^{\infty} i^{1+\delta} \xi_i^{in} <+\infty ~~~~~~ \text{and}~~~~~~ a_{i,j} \le A(i^{\delta} +j^{\delta}),~~~i,j\ge 1.
\end{align}
Then there is one and only one solution $\xi$ to \eqref{DSDE}--\eqref{IC} on $[0,+\infty)$ satisfying \eqref{MC} and, for each $T\in(0,+\infty)$,
\begin{align}
\sup_{t\in[0,T]}\sum_{i=1}^{\infty} i^{1+\delta} \xi_i(t) <+\infty. \label{HMF}
\end{align} 
\end{theorem}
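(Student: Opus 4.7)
The plan is two-fold. First, the existence of a solution $\xi$ satisfying both \eqref{MC} and the higher-moment bound \eqref{HMF} follows at once from Proposition \ref{prop 3} applied with $G(\zeta)=\zeta^{1+\delta}$: since $\delta\in[0,1]$ gives $1+\delta\in[1,2]$, Remark \ref{remark1} ensures $G\in\mathcal{G}_1$, and the hypothesis $\sum_i i^{1+\delta}\xi_i^{in}<+\infty$ is precisely the finiteness condition \eqref{GINMOM}. The substantive part of the theorem is therefore the uniqueness assertion within this class.

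For uniqueness, let $\xi$ and $\tilde\xi$ be two such solutions, set $w_i:=\xi_i-\tilde\xi_i$, $\sigma_i(t):=\sgn(w_i(t))$ and $U(t):=\sum_{i=1}^\infty i|w_i(t)|$. Since each $\xi_i,\tilde\xi_i$ is $C^1$ by the Corollary of Section \ref{SEC4}, $|w_i|$ is absolutely continuous with $\tfrac{d}{dt}|w_i|=\sigma_i\,\tfrac{dw_i}{dt}$ a.e., and I would justify the interchange $U'(t)=\sum_i i\sigma_i\tfrac{dw_i}{dt}$ by dominating the series using $\sum_i i(\xi_i+\tilde\xi_i)$, which is finite by mass conservation. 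Substituting \eqref{DSDE} together with the polarization identity $\xi_a\xi_b-\tilde\xi_a\tilde\xi_b = w_a\xi_b+\tilde\xi_a w_b$ then separates the right-hand side into a piece carrying $|w_i|$ (from the gain and loss terms acting on $\xi_j$) and a piece carrying $\tilde\xi_i$ times $|w_j|$.

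In the first piece, after shifting $i-1\mapsto i$ in the gain contribution, the coefficient $i+1$ pairs with the $-i$ arising from the first loss sum, reducing everything to $\sum_i|w_i|\sum_{j=1}^i j\gamma_{i,j}\xi_j$ (plus a non-positive remainder coming from $\sum_{j\ge i}\gamma_{i,j}\xi_j$, which I discard). Using $\gamma_{i,j}\le 2Ai^\delta$ whenever $j\le i$, a consequence of \eqref{ASSUM2}, this is dominated by $2A\|\xi^{in}\|_{\Xi_1}U(t)$. The main obstacle lies in the second piece, namely $\sum_i(2i+1)\tilde\xi_i\sum_{j=1}^i j\gamma_{i,j}|w_j|+\sum_i i\tilde\xi_i\sum_{j\ge i}\gamma_{i,j}|w_j|$: a naive application of $\gamma_{i,j}\le A(i^\delta+j^\delta)$ produces a factor of $\sum_j j^{1+\delta}|w_j|$, which by \eqref{HMF} applied to $\xi$ and $\tilde\xi$ is only a priori finite and not controlled by $U$, so the Gronwall loop fails to close. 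The remedy is to exploit the definite ordering of the indices in each double sum: in the first, $j\le i$ forces $\gamma_{i,j}\le 2Ai^\delta$; in the second, $i\le j$ forces $\gamma_{i,j}\le 2Aj^\delta$. Swapping the order of summation and absorbing the extra $i^\delta$ (respectively $j^\delta$) into the $(1+\delta)$-moment of $\tilde\xi$, together with the trivial bound $j^\delta\le j$, bounds each of these two terms by a constant multiple of $U(t)$.

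Putting everything together produces $U'(t)\le C(T)U(t)$ on $[0,T]$, with $C(T)$ depending only on $A$, $\|\xi^{in}\|_{\Xi_1}$ and $\sup_{[0,T]}\sum_i i^{1+\delta}(\xi_i(t)+\tilde\xi_i(t))$, the latter finite by Proposition \ref{prop 3}. Since $U(0)=0$, Gronwall's inequality gives $U\equiv 0$ on $[0,T]$, and as $T$ is arbitrary, $\xi\equiv\tilde\xi$ on $[0,+\infty)$.
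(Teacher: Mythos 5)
Your proposal is correct and follows essentially the same route as the paper: existence and \eqref{HMF} via Proposition \ref{prop 3} with $G(\zeta)=\zeta^{1+\delta}$, and uniqueness via a Gronwall argument on $\sum_i i|\xi_i-\eta_i|$ using the sign functions, the polarization identity, and the key observation that the index ordering in each double sum lets the factor $i^\delta$ (resp.\ $j^\delta$) be absorbed into the $(1+\delta)$-moment of one solution and the conserved first moment of the other. The only difference is presentational: you differentiate the infinite series directly (justified by domination), whereas the paper works with the truncated sums $\sum_{i=1}^q$ via Lemma \ref{Lemma1} and sends $q\to\infty$ at the end.
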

\begin{proof}
As $\varsigma\mapsto \varsigma^{1+\delta} $ belongs to $\mathcal{G}_1$, the existence of solution to \eqref{DSDE}--\eqref{IC} on $[0,+\infty)$ satisfying \eqref{MC} and \eqref{HMF} follows from Proposition \ref{prop 3}.

Let  $\xi$ and $\eta$ be two distinct solutions of \eqref{DSDE}--\eqref{IC} having initial condition $\xi_i(0) =  \eta_i(0)$ for all $i \geq 1$. Let $\pi(t) = \xi(t) -\eta(t)$, for $ t \in [0,T]$. Then we consider the following function as

\begin{align*}
\mathcal{G}(t)= \sum_{i=1}^{q}i \big|\xi_i(t) - \eta_i(t) \big|  =\sum_{i=1}^{q} i|\pi_i(t)|
\end{align*}
\begin{align}
\frac{d\pi_i}{dt}=& \sum_{j=1}^{i-1}j \gamma_{i-1,j}  \big[ \xi_{i-1}(t) \xi_j(t) -\eta_{i-1}(t) \eta_j(t) \big] -\sum_{j=1}^i j  \gamma_{i,j}  \big[ \xi_{i}(t) \xi_j(t) -\eta_{i}(t) \eta_j(t) \big] \nonumber\\
& + \sum_{j=i}^{\infty} \gamma_{i,j}\big[ \xi_{i}(t) \xi_j(t) -\eta_{i}(t) \eta_j(t) \big].\label{EST5}
\end{align}

Now using equation \eqref{Lemma1equation} for $\xi_i$ and $\eta_i$ and taking the difference with $ \Phi_i = i\sgn(\pi_i(t))$, we have

\begin{align}
\sum_{i=1}^{q}i \big| \xi_i(t) - \eta_i(t) \big| = &\int_0^t  \Bigg[\sum_{i=1}^{q-1}\sum_{j=1}^{i}(i+1)\sgn(\pi_{i+1}(s)) j \gamma_{i,j} \big[\xi_{i}(s) \xi_j(s) -\eta_{i}(s) \eta_j(s) \big] \nonumber\\
&-\sum_{i=1}^q \sum_{j=1}^i (ji\sgn(\pi_i(s))+ j \sgn(\pi_j(s))  \gamma_{i,j}  \big[\xi_{i}(s) \xi_j(s) -\eta_{i}(s) \eta_j(s) \big] \nonumber\\
&-\sum_{i=q+1}^{\infty} \sum_{j=1}^{q} j\sgn(\pi_j(s))  \gamma_{i,j}\big[ \xi_{i}(s) \xi_j(s) -\eta_{i}(s) \eta_j(s) \big] \Big ] ds. \label{EST0}
\end{align}
Since 
\begin{align*}
\xi_i(t) \xi_j(t) - \eta_i(t) \eta_j(t) = \xi_i(t) \pi_j(t)+ \eta_j(t) \pi_i(t),
\end{align*}

the identity \eqref{EST0} can be written as 
\begin{align}
\sum_{i=1}^{q}i\big|\xi_i(t) - \eta_i(t) \big| = &\int_0^t \Bigg[\sum_{i=1}^{q-1}\sum_{j=1}^{i}(i+1) j\sgn(\pi_{i+1}(s))  \gamma_{i,j} \big[\xi_{i}(s) \pi_j(s)+\eta_{j}(s) \pi_i(s) \big] \nonumber\\
&-\sum_{i=1}^{q}\sum_{j=1}^i (ji\sgn(\pi_i(s))+ j\sgn(\pi_j(s))\gamma_{i,j}  \big[ \xi_{i}(s) \pi_j(s) +\eta_{j}(s) \pi_i(s) \big] \nonumber\\
&- \sum_{i=q+1}^{\infty}\sum_{j=1}^{q}j \sgn(\pi_j(s)) \gamma_{i,j}\big[ \xi_{i}(s) \pi_j(s)+\eta_{j}(s) \pi_i(s) \big] \Bigg ] ds:= I_1 + I_2 +I_3. \label{I123}
\end{align}

Now, we estimate the terms $I_1$, $I_2$ and $I_3$ separately. Let us first consider the term $I_1$ as 
\begin{align}
I_1 =& \int_0^t \sum_{i=1}^{q-1}\sum_{j=1}^{i}(i+1) j\sgn(\pi_{i+1}(s))  \gamma_{i,j} \big[\xi_{i}(s) \pi_j(s)+\eta_{j}(s) \pi_i(s) \big] ds \nonumber\\
& \leq \int_0^t \sum_{i=1}^{q-1}\sum_{j=1}^{i} ij \gamma_{i,j} \xi_{i}(s)|\pi_j(s)| ds +\int_0^t \sum_{i=1}^{q-1}\sum_{j=1}^{i} ij \gamma_{i,j} \eta_{j}(s)|\pi_i(s)| ds \nonumber\\
&+ \int_0^t \sum_{i=1}^{q-1}\sum_{j=1}^{i} j \gamma_{i,j} \xi_{i}(s)|\pi_j(s)| ds +\int_0^t \sum_{i=1}^{q-1}\sum_{j=1}^{i} j \gamma_{i,j} \eta_{j}(s)|\pi_i(s)| ds. \label{I_1}
\end{align}
Similarly $I_2$ can be evaluated as
\begin{align}
I_2 =&-\sum_{i=1}^{q}\sum_{j=1}^i (j i \sgn(\pi_i(s))+ j\sgn(\pi_j(s))\gamma_{i,j}  \big[ \xi_{i}(s) \pi_j(s) +\eta_{j}(s) \pi_i(s) \big]ds \nonumber\\
& \leq \int_0^t \sum_{i=1}^{q}\sum_{j=1}^i ij \gamma_{i,j}  \xi_{i}(s)|\pi_j(s)| ds-  \int_0^t \sum_{i=1}^{q}\sum_{j=1}^i ij \gamma_{i,j}\eta_{j}(s) |\pi_i(s)|ds \nonumber \\
&- \int_0^t \sum_{i=1}^{q}\sum_{j=1}^i j\gamma_{i,j}   \xi_{i}(s) |\pi_j(s)|ds+ \int_0^t  \sum_{i=1}^{q}\sum_{j=1}^ij\gamma_{i,j}\eta_{j}(s) |\pi_i(s)|ds. \label{I_2}
\end{align}

Adding \eqref{I_1} and \eqref{I_2}, we get

\begin{align}
I_1 + I_2 &\leq 2 \int_0^t \sum_{i=1}^{q}\sum_{j=1}^{i} i j \gamma_{i,j} \xi_{i}(s)|\pi_j(s)| ds + 2  \int_0^t  \sum_{i=1}^{q}\sum_{j=1}^ij\gamma_{i,j}\eta_{j}(s) |\pi_i(s)|ds \nonumber \\
&\leq  4A (\Delta_{\lambda}(T)+\Delta_1) \int_0^t \sum_{i=1}^q i |\pi_i(s)| ds \label{I_1+I_2}
\end{align}
Finally, $I_3$ estimated as 
\begin{align}
I_3 =&-\int_0^t \sum_{i=q+1}^{\infty}\sum_{j=1}^{q}j \sgn(\pi_j(s)) \gamma_{i,j}\big[ \xi_{i}(s) \pi_j(s)+\eta_{j}(s) \pi_i(s) \big] \Bigg ] ds \nonumber \\
& \leq -A \int_0^t  \sum_{i=q+1}^{\infty}\sum_{j=1}^{q}  j(i^{\delta}+j^{\delta})  \xi_{i}(s) |\pi_j(s)| ds +A \int_0^t  \sum_{i=q+1}^{\infty}\sum_{j=1}^{q}  j(i^{\delta}+j^{\delta}) \eta_{j}(s) |\pi_i(s)| ds \nonumber \\
& \leq A (\Delta_{\lambda}(T)+\Delta_1) \int_0^t  \sum_{i=q+1}^{\infty} i  (\xi_{i}(s) + \eta_{i}(s)) ds\label{I_3}
\end{align}

Using \eqref{I_1}, \eqref{I_2} and \eqref{I_3} in \eqref{I123}, we obtain
\begin{align}
\sum_{i=1}^{q}i\big|\pi_i(t) \big| \leq & 4A (\Delta_{\lambda}(T)+\Delta_1) \int_0^t \sum_{i=1}^q i |\pi_i(s)| ds + A (\Delta_{\lambda}(T)+\Delta_1) \int_0^t  \sum_{i=q+1}^{\infty} i  (\xi_{i}(s) + \eta_{i}(s)) ds \label{EST6}
\end{align}

Therefore, using \eqref{HMF}, we may pass to the limit as $q \to \infty$ in \eqref{EST6}, we obtain
\begin{align*}
\sum_{i=1}^{\infty}i|\pi_i(t)| \leq & 4A (\Delta_{\lambda}(T)+\Delta_1) \int_0^t \sum_{i=1}^{\infty} i |\pi_i(s)| ds
\end{align*} 
Since $\pi_i(0) =0$, then, by the application of Gronwall's lemma, we conclude that
\begin{align*}
\sum_{i=1}^{\infty} i |\pi_i(t)|= 0, \hspace{.4cm} \forall t \in [0,T],
\end{align*}
hence $\pi_i=0$, for all $i\geq 1$ and $ t \in[0,T]$, which proves the uniqueness.
\end{proof}

\section{\textbf{Continuous dependence on initial data}} \label{SEC6}
Concerning the continuous dependence relative to the initial condition,
we prove the following  result.
\begin{prop}
If assumptions \eqref{ASSUM1}--\eqref{ASSUM2} holds and if $\xi$ and $\eta$ are solutions to \eqref{DSDE} in $\Xi_1^+$ satisfying $\xi(0)= \xi^{in}$ and $ \eta(0) = \eta^{in}$ then, for each $t\in [0,T]$, there is a positive constant $\Upsilon(T,\Delta_1, \Delta_{\lambda}(T))$ such that
\begin{align}
\|\xi-\eta\|_1 \leq  \Upsilon(T,\Delta_1, \Delta_{\lambda}(T)) \|\xi^{in} -\eta^{in}\|_1. \label{CD}
\end{align}  
\end{prop}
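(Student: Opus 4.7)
The plan is to follow the structure of the uniqueness proof almost verbatim, keeping track of the now non-vanishing initial discrepancy. Set $\pi_i(t) = \xi_i(t) - \eta_i(t)$ and, for fixed $q > 1$, consider
\begin{align*}
\mathcal{G}_q(t) = \sum_{i=1}^{q} i \,|\pi_i(t)|.
\end{align*}
The starting point is to apply Lemma \ref{Lemma1} to $\xi$ and $\eta$ with the weight $\Phi_i = i\,\sgn(\pi_i(t))$, subtract, and use the algebraic identity $\xi_i\xi_j - \eta_i\eta_j = \xi_i \pi_j + \eta_j \pi_i$ to split the resulting expression into the three pieces $I_1,I_2,I_3$ exactly as in the uniqueness argument.

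Next I would carry out the termwise estimates: the combination $I_1 + I_2$ leads, after cancellation of the $ij$-symbolic terms and using $\gamma_{i,j}\le A(i+j)$ together with the uniform bounds $\Delta_1 = \sup_{[0,T]}\|\xi(t)\|_1 \vee \|\eta(t)\|_1$ and $\Delta_\lambda(T)=\sup_{[0,T]}\sum_i i^{1+\delta}(\xi_i+\eta_i)$ coming from \eqref{HMF}, to an inequality of the form
\begin{align*}
I_1+I_2 \le 4A\bigl(\Delta_\lambda(T)+\Delta_1\bigr)\int_0^t \sum_{i=1}^q i\,|\pi_i(s)|\,ds,
\end{align*}
while the boundary term satisfies
\begin{align*}
I_3 \le A\bigl(\Delta_\lambda(T)+\Delta_1\bigr)\int_0^t \sum_{i=q+1}^{\infty} i\bigl(\xi_i(s)+\eta_i(s)\bigr)\,ds.
\end{align*}
Taking $q\to\infty$ in the resulting inequality makes $I_3$ vanish thanks to the tail control provided by \eqref{HMF}, and one is left, after reinstating the initial data, with
\begin{align*}
\sum_{i=1}^{\infty} i\,|\pi_i(t)| \le \|\xi^{in}-\eta^{in}\|_1 + 4A\bigl(\Delta_\lambda(T)+\Delta_1\bigr)\int_0^t \sum_{i=1}^{\infty} i\,|\pi_i(s)|\,ds.
\end{align*}
A direct application of Gronwall's lemma then yields \eqref{CD} with
\begin{align*}
\Upsilon\bigl(T,\Delta_1,\Delta_\lambda(T)\bigr) = \exp\!\bigl(4A(\Delta_\lambda(T)+\Delta_1)\,T\bigr).
\end{align*}

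The main obstacle, as in the uniqueness proof, is the non-smoothness of $\sgn(\pi_i)$ at zeros of $\pi_i$; since Lemma \ref{Lemma1} is stated for sequences $\Phi_i$ of at most polynomial growth (and thus for fixed weights), one has to justify the use of $\Phi_i(t)=i\,\sgn(\pi_i(t))$ rigorously. The cleanest way is to note that, for each $i$, $t\mapsto |\pi_i(t)|$ is absolutely continuous on $[0,T]$ with $\frac{d}{dt}|\pi_i| = \sgn(\pi_i)\frac{d\pi_i}{dt}$ almost everywhere, apply Lemma \ref{Lemma1} with the fixed weights $\Phi_i = i\varepsilon_i$ for $\varepsilon_i\in\{-1,+1\}$ and assemble the pieces, or to replace $|\cdot|$ by a smooth convex approximation and pass to the limit. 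The remaining step —  justifying the interchange $q\to\infty$ — is standard once \eqref{HMF} is available, so the heart of the argument is the bilinear bookkeeping leading to the coefficient $4A(\Delta_\lambda(T)+\Delta_1)$ in the Gronwall inequality.
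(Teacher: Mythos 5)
Your proposal follows exactly the paper's route: the paper's proof simply reruns the estimates of the uniqueness theorem with $\pi=\xi-\eta$, keeps the non-vanishing term $\sum_i i|\pi_i(0)|$, lets $q\to\infty$, and applies Gronwall's lemma to obtain \eqref{CD} with the same exponential constant. Your additional remark on justifying the time-dependent weight $\Phi_i = i\,\sgn(\pi_i(t))$ addresses a point the paper glosses over, but the argument is essentially identical.
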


\begin{proof}
Defining $\pi(t) =\xi(t) -\eta(t)$ and using the same estimates as in the proof of Theorem \ref{UNIQ} to obtain
\begin{align*}
\sum_{i=1}^{q}i\big|\pi_i(t) \big| \leq & \sum_{i=1}^{q}i\big|\pi_i(0)\big| + 4A (\Delta_{\lambda}(T)+\Delta_1) \int_0^t \sum_{i=1}^q i |\pi_i(s)| ds \\
&+ A (\Delta_{\lambda}(T)+\Delta_1) \int_0^t  \sum_{i=q+1}^{\infty} i  (\xi_{i}(s) + \eta_{i}(s)) ds.
\end{align*}
As a consequence, by making $q\to \infty$ and using the similar arguments as in the proof of uniqueness, we get
\begin{align*}
\sum_{i=1}^{\infty}i\big|\pi_i(t) \big| \leq & \sum_{i=1}^{\infty}i\big|\pi_i(0)\big| + 4A (\Delta_{\lambda}(T)+\Delta_1) \int_0^t \sum_{i=1}^{\infty} i |\pi_i(s)| ds.
\end{align*}
By using the Gronwall's lemma and then taking supremum over $t$, we obtain \eqref{CD}.
\end{proof}

\section{\textbf{Asymptotic behavior of solutions}}\label{SEC7}

In this section, we investigate the behaviour of the solutions to \eqref{DSDE}--\eqref{IC} as $t\rightarrow\infty$ and here we follow the proof from \cite[Theorem 4.3]{COSTA 94}. 
\begin{thm}\label{thm1}
For $T \in (0,+\infty)$ and $\xi^{in} = (\xi_i^{in})_{i \geq 1} \in \Xi_1^+$. let $\xi=(\xi_{i})_{i\geq1} \in \Xi_1^+$ be a solution of \eqref{DSDE}--\eqref{IC}, then there is $\xi^{\infty}=(\xi_{i}^{\infty})_{i\geq 1}\in \Xi_1^+$ such that
\begin{align}\label{thmeq1}
\lim_{t\rightarrow+\infty}\xi_{i}(t)=\xi_{i}^{\infty},~~i\geq1.
\end{align}
Moreover, if $\gamma_{i,i}>0$ for all $i\geq1$, then we have
\begin{align}\label{thmeq2}
\xi_{i}^{\infty}=0, \hspace{.2cm} \text{for all} \hspace{.2cm}i \geq 1.
\end{align}
\end{thm}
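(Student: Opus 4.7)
The plan is twofold. First, I would show that the truncated particle-number sums $N_q(t):=\sum_{i=1}^{q}\xi_i(t)$ are non-increasing in $t$, which yields convergence of each $\xi_i(t)$ as $t\to\infty$ through telescoping. Second, I would exploit the integrability of $\xi_i^2$ on $[0,+\infty)$ — a by-product of this very monotonicity — together with the pointwise convergence to deduce that the limits vanish when $\gamma_{i,i}>0$.

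For the first claim, I apply identity \eqref{Lemma1equation} of Lemma \ref{Lemma1} with the constant sequence $\Phi_i\equiv 1$, which is of polynomial growth (degree zero). Splitting $P_2=P_1\cup\{(q,j):1\le j\le q\}$ and observing that the gain contribution $\sum_{P_1}j\gamma_{i,j}\xi_i\xi_j$ is absorbed by the matching portion of the loss $\sum_{P_2}(j+1)\gamma_{i,j}\xi_i\xi_j$, the identity collapses to
\begin{align*}
\frac{dN_q}{dt}=-\sum_{i=1}^{q-1}\sum_{j=1}^{i}\gamma_{i,j}\xi_i\xi_j-\sum_{j=1}^{q}(j+1)\gamma_{q,j}\xi_q\xi_j-\sum_{i=q+1}^{\infty}\sum_{j=1}^{q}\gamma_{i,j}\xi_i\xi_j\le 0.
\end{align*}
Thus $N_q$ is non-increasing and bounded below by $0$, so $N_q^{\infty}:=\lim_{t\to\infty}N_q(t)\in[0,N_q(0)]$ exists. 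Setting $N_0^{\infty}=0$ and $\xi_i^{\infty}:=N_i^{\infty}-N_{i-1}^{\infty}$, the non-negativity of $\xi_i=N_i-N_{i-1}$ passes to the limit, giving \eqref{thmeq1} with $\xi_i^{\infty}\ge 0$. The summability $(\xi_i^{\infty})_{i\ge 1}\in\Xi_1^+$ follows from Fatou's lemma on $(\mathbb{N},2^{\mathbb{N}})$ with the weight $i$, combined with mass conservation \eqref{MC}, yielding $\sum_{i=1}^{\infty} i\,\xi_i^{\infty}\le\liminf_{t\to\infty}\sum_{i=1}^{\infty} i\,\xi_i(t)=\|\xi^{in}\|_{\Xi_1}$.

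For the second claim, integrating the monotonicity estimate on $[0,t]$ and passing to the limit $t\to\infty$ gives, for each $q\ge 2$,
\begin{align*}
\int_{0}^{\infty}\sum_{i=1}^{q-1}\sum_{j=1}^{i}\gamma_{i,j}\xi_i(\tau)\xi_j(\tau)\,d\tau\le N_q(0)\le\|\xi^{in}\|_{\Xi_1}.
\end{align*}
Specialising to $q=i+1$ and retaining only the diagonal $(i,i)$-term yields $\gamma_{i,i}\int_{0}^{\infty}\xi_i(\tau)^2\,d\tau<+\infty$. Under the hypothesis $\gamma_{i,i}>0$ this forces $\xi_i\in L^2(0,+\infty)$. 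Combined with the pointwise convergence $\xi_i(t)\to\xi_i^{\infty}\ge 0$, any strictly positive $\xi_i^{\infty}$ would give $\xi_i(\tau)^2\ge(\xi_i^{\infty})^2/4$ on an infinite-measure tail, contradicting $L^2$-integrability. Hence $\xi_i^{\infty}=0$ for every $i\ge 1$, which is \eqref{thmeq2}.

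The only genuinely delicate step is the cancellation producing the sign of $dN_q/dt$; once that identity is in hand, the remainder is elementary (monotone convergence of scalars, Fatou on the counting measure, and the simple $L^2$-versus-pointwise-limit implication). No fresh compactness or fixed-point machinery is needed beyond what has been developed in the preceding sections.
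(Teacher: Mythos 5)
Your proof of the first assertion \eqref{thmeq1} is essentially the paper's: both apply Lemma \ref{Lemma1} with $\Phi\equiv 1$, obtain that $t\mapsto\sum_{i=1}^{q}\xi_i(t)$ is non-increasing and non-negative, and recover $\xi_i^{\infty}$ by telescoping; your explicit identity for $dN_q/dt$ is a correct sharpening of the paper's inequality \eqref{EST7}, and the Fatou argument for $\xi^{\infty}\in\Xi_1^+$ matches the paper's (equally brief) justification. For the second assertion \eqref{thmeq2}, however, you take a genuinely different and cleaner route. The paper works size by size: it writes $\xi_q(t+\tau)-\xi_q(t)$ from the equation, bounds the gain term using the already-established limits $\xi_1^{\infty}=\dots=\xi_{q-1}^{\infty}=0$, and runs an induction in $q$, each step ending with the same ``otherwise $\int_t^{t+\tau}\xi_q^2\ge\vartheta_q^2\tau$'' contradiction on a window of fixed length $\tau$. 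You instead integrate the dissipation inequality over all of $[0,+\infty)$ to get
\begin{align*}
\gamma_{i,i}\int_0^{\infty}\xi_i(\tau)^2\,d\tau\le \|\xi^{in}\|_{\Xi_1}
\end{align*}
for every $i$ simultaneously, and then combine $\xi_i\in L^2(0,+\infty)$ with the pointwise limit. This avoids the induction entirely and never needs to control the gain term $\xi_{q-1}\sum_k k\gamma_{q-1,k}\xi_k$; what the paper's approach buys in exchange is that it only uses the local-in-time information $\lim_{t\to\infty}\int_t^{t+\tau}\gamma_{q,q}\xi_q^2\,ds=0$, but under the stated hypotheses your global integral is available at no extra cost, so nothing is lost. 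Both arguments are correct.
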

\begin{proof}
Consider $q\geq1,~\tau\geq0$ and $s\geq \tau$. Using $\Phi\equiv1$ into Lemma \ref{Lemma1} we obtain that
\begin{align}
   \frac{d}{d t}\sum_{i=1}^{q}\xi_{i}(t)=&\sum_{i=1}^{q-1}\sum_{j=1}^{i}j\gamma_{i,j}\xi_{i}(t)\xi_{j}(t)
   -\sum_{i=1}^{q}\sum_{j=1}^{i}(j+1)\gamma_{i,j}\xi_{j}(t) \xi_{k}(t)\nonumber\\
   &-\sum_{i=q+1}^{\infty}\sum_{j=1}^{q}\gamma_{i,j}\xi_{i}(t) \xi_{j}(t)\nonumber\\
   \leq&-\sum_{i=1}^{q}\sum_{j=1}^{i}\gamma_{i,j}\xi_{i}(t) \xi_{j}(t)-\sum_{i=q+1}^{\infty}\sum_{j=1}^{q}\gamma_{i,j}\xi_{i}(t) \xi_{j}(t)\nonumber\\
   \leq&  -\frac{1}{2} \sum_{i=1}^{q}\sum_{j=1}^{q}\gamma_{i,j}\xi_{i}(t) \xi_{j}(t) \label{EST7}\\
   \leq &0. \nonumber
\end{align}

Taking integration  with respect to $t$ from $s$ to $\tau$, we have 
\begin{align}\label{thm1eq1}
\sum_{i=1}^{q}\xi_{j}(s)\geq\sum_{i=1}^{q}\xi_{j}(\tau).
\end{align}
Hence, for each $q\geq1$, the function $f_{q}:t\mapsto \sum_{i=1}^{q}\xi_{i}(t)$ is a non-increasing and non-negative function of time.

 Hence, there exists a positive constant $\overline{f_{q}}$ such that
$$f_{q}(t)\rightarrow \overline{f_{q}}~~~\mbox{as}~~t\rightarrow+\infty,$$
and thus
$$\xi_{q}(t)\rightarrow \xi_{q}^{\infty} ~~~\mbox{as}~~t\rightarrow+\infty,$$
with $\xi_{1}^{\infty}=\overline{f}_{1}$ and $\xi_{q}^{\infty}=\overline{f}_{q}-\overline{f}_{q-1}\geq0$ for $q\geq2$. Moreover, we conclude from $\xi= (\xi_i)_{i\geq 1}\in \Gamma_{\lambda}^+(T)$ that
$$\sum_{i=1}^{q}i^{\lambda}\xi_{i}(t)\leq\sup_{t\in[0,+\infty)}\|\xi(t)\|_{\lambda}<+\infty,$$
for each $q\geq1$ and $t\in[0, +\infty)$. Hence
$$\sum_{i=1}^{\infty}i^{\lambda}\xi_{i}^{\infty}<+\infty.$$

%%%%%%%%%%%%%%%%%%%%%%%%%%%%%%%%%%%%%%%%%%%%%%%%%%%%%%%%%%%%%%%%%%%%%%%%%%%%%%%%%%%%%%%%%%%%%%%%%%%%%%
%%%%%%%%%%%%%%%%%%%%%%%%%%%%%%%%%%%%%%%%%%%%%%%%%%%%%%%%%%%%%%%%%%%%%%%%%%%%%%%%%%%%%%%%%%%%%%%%%%%%%%%%%%%%%%%%

Now, we prove \eqref{thmeq2}, i.e. ${\xi}_{q}^{\infty}=0$ for all $q\geq1$. We first set $q=1$ into Lemma \ref{Lemma1} and obtain
\begin{align*}
\xi_{1}(t+\tau)-\xi_{1}(t)=&- \int_{t}^{t+\tau}[\xi_{1}^{2}(s)\gamma_{1,1} - \xi_{1}(s)\sum_{k=1}^{\infty}\gamma_{j,k}\xi_{k}(s)]ds\\
=& \int_{t}^{t+\tau}[-2\xi_{1}^{2}(s)\gamma_{1,1} - \xi_{1}(s)\sum_{k=2}^{\infty}\gamma_{1,k}\xi_{k}(s)]ds\\
\leq& -\int_{t}^{t+\tau}\Psi_{1,1}\xi_{1}^{2}(s)ds,\\
%\leq&-\xi_{1}^{2}\gamma_{1,1}\tau\leq0
\end{align*}
for all $t\geq0$ and $\tau>0$. Now, letting $t\rightarrow\infty$, we get
$$\lim_{t\rightarrow\infty}\int_{t}^{t+\tau}\gamma_{1,1}\xi_{1}^{2}(s)ds=0$$
and this implies $\xi_{1}^{\infty}=0$, because otherwise there would exist a positive constant $0<\vartheta_{1}<\xi_{1}^{\infty}$ such that, for all sufficiently large $t,~\xi_{1}(t)>\vartheta_{1}$ and
 $$\lim_{t\rightarrow\infty}\int_{t}^{t+\tau}\xi_{1}^{2}(s)ds\geq \gamma_{1,1}\vartheta_{1}^{2}\tau>0,$$
 a contradiction. Next, we set $q=2$ into Lemma \ref{Lemma1} and get
\begin{align*}
\xi_{2}(t+\tau)-\xi_{2}(t)=&\int_{t}^{t+\tau}[\xi^{2}_{1}(s)\gamma_{1,1} - \xi_{2}(s)\sum_{k=1}^{2}k \gamma_{2,k} \xi_{k}(s) - \xi_{2}(s)\sum_{k=2}^{\infty}\gamma_{2,k}\xi_{k}(s)]ds\\
=&\int_{t}^{t+\tau}[\xi^{2}_{1}(s)\gamma_{1,1} - 3\xi_{2}^{2}(s)\gamma_{2,2}- \gamma_{1,2}\xi_{1}(s)\xi_{2}(s) - \xi_{2}(s)\sum_{k=3}^{\infty}\gamma_{2,k}\xi_{k}(s)]ds\\
\leq& \int_{t}^{t+\tau}(\xi^{2}_{1}(s)\gamma_{1,1}-\xi_{2}^{2}(s)\gamma_{2,2})ds.
\end{align*}
Now, letting $t\rightarrow\infty$ and obtain
\begin{align*}
 0\leq&\lim_{t\rightarrow\infty}\int_{t}^{t+\tau}(\xi_{1}^{2}(s)\Psi_{1,1}-\xi_{2}^{2}(s)\gamma_{2,2})ds
\end{align*}
and since $\xi_{1}(t)\rightarrow0$ as $t\rightarrow\infty$ and $(\gamma_{i,i})_{i\geq1}>0$, we have
$$\lim_{t\rightarrow\infty}\int_{t}^{t+\tau}\gamma_{2,2}\xi_{2}^{2}(s)ds=0,$$
and this implies $\xi_{2}^{\infty}= 0$, because otherwise there would exist a positive constant $0<\vartheta_{2}<\xi_{2}^{\infty}$ such that, for all sufficiently large $t,~\xi_{2}(t)>\vartheta_{2}$ and
 $$\lim_{t\rightarrow\infty}\int_{t}^{t+\tau}\xi_{2}^{2}(s)ds\geq \gamma_{2,2}\vartheta_{2}^{2}\tau>0,$$
 a contradiction.\\
 Proceeding by induction, assuming $\xi_{1}^{\infty}=\cdots=\xi_{q-1}^{\infty}=0$ we prove $\xi_{q}^{\infty}=0$:
 \begin{align*}
 0=&\lim_{t\rightarrow\infty}\xi_{q}(t+\tau)-\xi_{q}(t)\\
 =&\lim_{t\rightarrow\infty}\bigg[\int_{t}^{t+\tau}\bigg(\xi_{q-1}(s)\sum_{k=1}^{q-1}k \gamma_{q-1,k} \xi_{k}(s) - \xi_{q}(s)\sum_{k=1}^{q}k \gamma_{q,k}  \xi_{k}(s) - \xi_{q}(s)\sum_{k=q}^{\infty}\gamma_{q,k}\xi_{k}(s)\bigg)ds\bigg]\\
 \leq&-\Psi_{q,q}\lim_{t\rightarrow\infty}\int_{t}^{t+\tau}(\xi_{q}(t))^{2}ds,
 \end{align*}
 and the conclusion follows as before.
  Hence the proof of Theorem \ref{thm1} is completed.
\end{proof}

Finally, in the next proposition, we will show that the total number of particles goes to zero as time increases to infinity.
\begin{prop}
For $T \in (0,+\infty)$ and $\xi^{in} = (\xi_i^{in})_{i \geq 1} \in \Xi_1^+$. Let $\xi=(\xi_{i})_{i\geq1} \in  \Xi_1^+ $ be a solution of \eqref{DSDE}--\eqref{IC}. Further assume that for some $\zeta>0$
\begin{align} 
\gamma_{i,j} \geq \zeta, \hspace{.2cm}  \text{and}\hspace{.1cm} i,j \geq 1. \label{GAMMALB}
\end{align}
Then \begin{align*}
\lim_{t \to \infty} \sum_{i=1}^{\infty} \xi_i(t) = 0. 
\end{align*}
\end{prop}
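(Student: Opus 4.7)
The plan is to exploit the differential inequality for the truncated particle number already derived in the proof of Theorem \ref{thm1}, combined with the uniform positivity of the kernel, and then solve the resulting ODE-inequality for $1/N_q$.

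More precisely, set $N_q(t) = \sum_{i=1}^{q}\xi_i(t)$. In the chain of inequalities leading to \eqref{EST7}, we established
\begin{align*}
\frac{d}{dt} N_q(t) \le -\frac{1}{2}\sum_{i=1}^{q}\sum_{j=1}^{q} \gamma_{i,j}\xi_i(t)\xi_j(t).
\end{align*}
Invoking the lower bound \eqref{GAMMALB}, the right-hand side is bounded above by $-\frac{\zeta}{2} N_q(t)^2$, which is the key ingredient the proposition needs. So my first step is simply to record this as
\begin{align*}
\frac{d}{dt} N_q(t) \le -\frac{\zeta}{2}\, N_q(t)^2, \qquad t\ge 0,\; q\ge 1.
\end{align*}

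Next I would solve this differential inequality. Since $N_q$ is non-negative and non-increasing by \eqref{thm1eq1}, either $N_q(t_0)=0$ for some $t_0$ (in which case $N_q$ vanishes for all subsequent times) or $N_q(t)>0$ on $[0,\infty)$, in which case $\frac{d}{dt}\bigl(1/N_q(t)\bigr) = -N_q'(t)/N_q(t)^2 \ge \zeta/2$, so integrating from $0$ to $t$ yields
\begin{align*}
\frac{1}{N_q(t)} \ge \frac{1}{N_q(0)} + \frac{\zeta t}{2} \ge \frac{\zeta t}{2},
\end{align*}
whence $N_q(t) \le \frac{2}{\zeta t}$ for every $t>0$. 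The crucial point is that this bound is independent of $q$.

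Finally, since $\xi(t)\in\Xi_1^+$ ensures that $\sum_{i=1}^{\infty}\xi_i(t)<+\infty$, the partial sums $N_q(t)$ converge monotonically to $\sum_{i=1}^{\infty}\xi_i(t)$ as $q\to\infty$, so passing to the limit gives
\begin{align*}
\sum_{i=1}^{\infty}\xi_i(t) \le \frac{2}{\zeta t}, \qquad t>0,
\end{align*}
from which the desired conclusion follows by letting $t\to+\infty$. I do not anticipate a real obstacle here: the argument is essentially a Riccati-type comparison, and the only subtlety is ensuring the uniformity in $q$ of the estimate before taking the monotone limit, which is guaranteed because the constant $\zeta$ in \eqref{GAMMALB} does not depend on the indices.
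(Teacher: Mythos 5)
Your proof is correct, and it takes a genuinely different route from the paper. The paper integrates the truncated inequality \eqref{EST7} over $[s,\tau]$, passes to the limit $q\to\infty$ inside the double sum (which requires invoking the growth condition \eqref{ASSUM2} in addition to \eqref{GAMMALB}), concludes that $M_0(t)=\sum_{i\ge 1}\xi_i(t)$ belongs to $L^2(0,+\infty)$, and then combines this integrability with the monotonicity of $M_0$ to deduce that $M_0(t)\to 0$. You instead stay at the truncated level, turn \eqref{EST7} and \eqref{GAMMALB} into the Riccati-type inequality $N_q'\le -\tfrac{\zeta}{2}N_q^2$, and solve it to obtain the $q$-uniform pointwise bound $N_q(t)\le 2/(\zeta t)$, after which the monotone limit $q\to\infty$ is immediate. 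Your handling of the degenerate case (where $N_q$ vanishes at some finite time) is adequate, since nonnegativity and monotonicity force $N_q\equiv 0$ thereafter and the bound $2/(\zeta t)$ holds trivially there; and the differentiability of $N_q$ needed for the comparison is available because each $\xi_i$ is $C^1$. Your argument buys two things the paper's does not: it avoids any limit interchange in the quadratic term (only the lower bound on $\gamma_{i,j}$ is used, not \eqref{ASSUM2}), and it yields the explicit algebraic decay rate $\sum_{i=1}^{\infty}\xi_i(t)\le 2/(\zeta t)$, which is strictly stronger than the bare convergence to zero asserted in the proposition.
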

 
 \begin{proof}
From \eqref{EST7}, it follows that
\begin{align*}
\sum_{i=1}^{q} \xi_i(\tau) + \frac{1}{2}\int_s^{\tau} \sum_{i=1}^{q}\sum_{j=1}^{q}\gamma_{i,j}\xi_{i}(t) \xi_{j}(t)dt \leq \sum_{i=1}^{q} \xi_i(s).  
\end{align*}
Now, the growth conditions \eqref{ASSUM2} and \eqref{GAMMALB}  allow to pass to the limit as $q\to \infty$ in the above equality, we thus obtain 
\begin{align*}
\sum_{i=1}^{\infty} \xi_i(\tau) + \frac{1}{2}\int_s^{\tau} \sum_{i=1}^{\infty}\sum_{j=1}^{\infty}\gamma_{i,j}\xi_{i}(t) \xi_{j}(t)dt \leq \sum_{i=1}^{\infty} \xi_i(s), 
\end{align*}
which implies that
\begin{align*}
\int_s^{\tau} \Big(\sum_{i=1}^{\infty}\xi_{i}(t) \Big)^2 dt \leq \frac{2}{\zeta}\sum_{i=1}^{\infty} \xi_i(s).
\end{align*}

 Next, we deduce from the previous estimate (with $s=0 $ and $\tau=+\infty$) that
\begin{align*}
\int_0^{\infty} \Big(\sum_{i=1}^{\infty} \xi_i(t)\Big)^2 dt \leq \frac{2}{\zeta}\sum_{i=1}^{\infty} \xi_i^{in} \leq \frac{2}{\zeta} \sum_{i=1}^{\infty} i \xi_i^{in} < + \infty.
\end{align*}

Recalling \eqref{thm1eq1}, we realize that total number of particles $M_0$ is a non-increasing and non-negative function of time which also belongs to $L^2(0, + \infty)$. Therefore, we obtain
\begin{align*}
\lim_{t\to \infty}\sum_{i=1}^{\infty} \xi_i(t) =0.
\end{align*}

\end{proof}

\vspace{1cm}

\subsection*{Acknowledgments}
 This work is  partially supported by Department of Science \& Technology (DST), India-Deutscher Akademischer Austauschdienst (DAAD) within the Indo-German joint project entitled ''Analysis and Numerical Methods for Population Balance Equations''. The authors would like to thank Prof. Philippe Lauren\c{c}ot for helpful discussion.
\subsection*{\textbf{Funding Information}}
MA would like to thank the University Grant Commission (UGC), India for granting the Ph.D. fellowship through Grant No. 416611.
\subsection*{\textbf{Competing Interests}}
The authors declare that they have no conflict of interests.
\subsection*{Author Contribution}
All the authors have contributed equally.
%%%%%%%%%%%%%%%%%%%%%%%%%%%%%%%%%%%%%%%
%%%%%%%%%%%%%%%%%%%%%%%%%%%%%%%%%%%%%%%%%%%%%%%%%%%%%%%%
%%%%%%%%%%%%%%%%%%%%%%%%%%%%%%%%%%%%%%%%%%%%%%%%%%%%%%%%%%%%%%%%

\end{document}